\documentclass[a4paper,12pt]{article}
\usepackage{amssymb,amsmath,amsthm,latexsym}
\usepackage{amsfonts}
	\usepackage{amsfonts}
\usepackage{graphicx}
\usepackage[pdftex,bookmarks,colorlinks=false]{hyperref}
\usepackage{verbatim}
\usepackage{caption}
\usepackage{subcaption}
\usepackage[hmargin=1.2in,vmargin=1.2in]{geometry}

\newtheorem{theorem}{Theorem}[section]

\newtheorem{definition}[theorem]{Definition}

\newtheorem{lemma} [theorem]{Lemma}

\newtheorem{remark}[theorem]{Remark}

\setlength{\parskip}{2.5pt}

\title{\textbf {\sc A Study on the Sparing Number of the Corona of Certain Graphs}}

\author{{\bf K P Chithra} $^{{1},{\ast}}$, {\bf K A Germina} $^{2}$  and {\bf N K Sudev $^{3}$}
\\ \\
$^{1}${\small Naduvath Mana, Nandikkara P O} \\ {\small  Thrissur - 680301, Kerala, India.}\\ {\small email: {\em chithrasudev@gmail.com}}
\vspace{0.3cm}
\\
$^{2}${\small Department of Mathematics, School of Mathematical \& Physical Sciences} \\ {\small Central University of Kerala, Kasaragod - 671316, Kerala, India.}\\ {\small email: {\em srgerminaka@gmail.com}}
\vspace{0.3cm}
\\
$^{3}${\small Department of Mathematics, Vidya Academy of Science \& Technology} \\ {\small  Thalakkottukara, Thrissur - 680501, Kerala, India.}\\ {\small email: {\em sudevnk@gmail.com}}
\\ \vspace{0.3cm}
$^{\ast}$ {\small Corresponding author.}
}

\date{}
\begin{document}
\maketitle

\begin{abstract}
Let $\mathbb{N}_0$ be the set of all non-negative integers and $\mathcal{P}(\mathbb{N}_0)$ be its the power set. An integer additive set-indexer (IASI) is defined as an injective function $f:V(G)\to \mathcal{P}(\mathbb{N}_0)$ such that the induced function $f^+:E(G) \to \mathcal{P}(\mathbb{N}_0)$ defined by $f^+ (uv) = f(u)+ f(v)$ is also injective, where $f(u)+f(v)$ is the sum set of $f(u)$ and $f(v)$. If $f^+(uv)=k~\forall~uv\in E(G)$, then $f$ is said to be a $k$-uniform integer additive set-indexer. An integer additive set-indexer $f$ is said to be a weak integer additive set-indexer if $|f^+(uv)|=\max(|f(u)|,|f(v)|)~\forall ~ uv\in E(G)$. We have some  characteristics of the graphs which admit weak integer additive set-indexers. In this paper, we study about the sparing number of the corona of two graphs.
\end{abstract}

\noindent \textbf{Key Words:} Integer additive set-indexers, mono-indexed elements of a graphs, weak integer additive set-indexers, sparing number of a graph.
\newline
\textbf{AMS Subject Classification: 05C78}

\section{Introduction}

For all  terms and definitions, not defined specifically in this paper, we refer to \cite{FH}, \cite{JAG} and \cite{HIS}. Unless mentioned otherwise, all graphs considered here are simple, finite and have no isolated vertices.

Let $\mathbb{N}_0$ denote the set of all non-negative integers. For all $A, B \subseteq \mathbb{N}_0$, the sum of these sets is denoted by  $A+B$ and is defined by $A + B = \{a+b: a \in A, b \in B\}$. The set $A+B$ is called the {\em sum set} of the sets $A$ and $B$. If either $A$ or $B$ is countably infinite, then their sum set is also countably infinite. Hence, the sets we consider here are all finite sets of non-negative integers. The cardinality of a set $A$ is denoted by $|A|$. The power set of  a set $A$ is denoted by $\mathcal{P}(A)$. 

We define an integral multiple of a set $A$, as the set, denoted by $n.A$,  every element of whose is an integral multiple of the corresponding element of $A$. Therefore, $|n.A|=|A|$.

\begin{definition}\label{D2}{\rm
\cite{GA} An {\em integer additive set-indexer} (IASI, in short) is defined as an injective function $f:V(G)\to \mathcal{P}(\mathbb{N}_0)$ such that the induced function $f^+:E(G) \to \mathcal{P}(\mathbb{N}_0)$ defined by $f^+ (uv) = f(u)+ f(v)$ is also injective}.
\end{definition}

\begin{lemma}
\cite{GS1} If $f$ is an integer additive set-indexer defined on a given graph $G$, then $\max(|f(u)|,|f(v)|) \le |f^+(uv)| \le |f(u)|\,|f(v)|, \forall ~ u,v\in  V(G)$.
\end{lemma}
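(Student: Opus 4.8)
The plan is to work directly with the definition of the sum set. Write $A = f(u)$ and $B = f(v)$, so that $f^+(uv) = A+B = \{a+b : a\in A,\, b\in B\}$. Both inequalities then reduce to elementary cardinality estimates for sum sets of finite sets of integers; the injectivity conditions in the definition of an IASI play no role here beyond guaranteeing that $A$ and $B$ are nonempty finite subsets of $\mathbb{N}_0$.

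For the upper bound, I would note that every element of $A+B$ is of the form $a+b$ for some pair $(a,b)\in A\times B$, so the addition map $(a,b)\mapsto a+b$ sends $A\times B$ onto $A+B$. Since the image of a map cannot have larger cardinality than its domain, $|A+B|\le |A\times B| = |A|\,|B| = |f(u)|\,|f(v)|$.

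For the lower bound, the key observation is that translation by a fixed integer is injective on $\mathbb{Z}$. Fixing any $b_0\in B$, the map $a\mapsto a+b_0$ is injective on $A$ and its image $\{a+b_0 : a\in A\}$ lies in $A+B$, whence $|A+B|\ge |A|$; the symmetric argument with a fixed $a_0\in A$ gives $|A+B|\ge |B|$. Together these yield $|A+B|\ge \max(|A|,|B|)=\max(|f(u)|,|f(v)|)$, which completes the chain. There is no real obstacle in this argument, since the result is a structural fact about sum sets rather than anything peculiar to set-indexers; the only point needing mild care is the nonemptiness of $A$ and $B$, required to choose the elements $a_0$ and $b_0$, which holds because $f$ assigns a nonempty subset of $\mathbb{N}_0$ to every vertex.
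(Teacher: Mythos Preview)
Your argument is correct: the upper bound follows from the surjection $A\times B\to A+B$, and the lower bound from the injectivity of translation by a fixed element. The paper itself offers no proof of this lemma at all---it is merely quoted from \cite{GS1}---so there is nothing to compare against; your proof is the standard elementary one and would serve perfectly well as a self-contained justification. The only caveat worth flagging is the one you already raised: the definition of an IASI maps into $\mathcal{P}(\mathbb{N}_0)$, which formally includes $\emptyset$, so nonemptiness of $f(u)$ and $f(v)$ is a tacit convention of the literature rather than a consequence of the stated definition.
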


\begin{definition}{\rm
\cite{GS1} An IASI $f$ is said to be a {\em weak IASI} if $|f^+(uv)|=\max(|f(u)|,|f(v)|)$ for all $u,v\in V(G)$. A weak IASI $f$ is said to be {\em weakly uniform IASI} if $|f^+(uv)|=k$, for all $u,v\in V(G)$ and for some positive integer $k$.  A graph which admits a weak IASI may be called a {\em weak IASI graph}.}
\end{definition}

The following is the necessary and sufficient condition for a given graph to admit a weak IASI. 

\begin{lemma}
\cite{GS1} A graph $G$ admits a weak integer additive set-indexer if and only if every edge of $G$ has at least one mono-indexed end vertex.
\end{lemma}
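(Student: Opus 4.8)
The plan is to reduce the statement to a purely set-theoretic fact about sum sets and then apply it edge by edge. The key claim is: for any two finite nonempty sets $A,B\subseteq\mathbb{N}_0$, one has $|A+B|=\max(|A|,|B|)$ if and only if $\min(|A|,|B|)=1$. Granting this claim, the lemma follows at once, since an IASI $f$ is weak precisely when $|f^+(uv)|=\max(|f(u)|,|f(v)|)$ for every edge $uv$, which by the claim holds exactly when $\min(|f(u)|,|f(v)|)=1$ for each edge, i.e. exactly when at least one end vertex of every edge is mono-indexed. Thus $G$ admits a weak IASI if and only if it admits an IASI under which every edge carries a mono-indexed end vertex.

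First I would dispatch the easy half of the claim. If, say, $|B|=1$, write $B=\{b\}$; then $A+B=\{a+b:a\in A\}$ is the translate of $A$ by $b$, and since translation is a bijection on $\mathbb{N}_0$ we get $|A+B|=|A|=\max(|A|,|B|)$. By the earlier lemma the lower bound $\max(|A|,|B|)\le|A+B|$ holds in general, so this computation is consistent with the established bounds.

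The harder half is the converse, which I expect to be the main obstacle: showing that if $\min(|A|,|B|)\ge 2$ then $|A+B|>\max(|A|,|B|)$ strictly. Assuming without loss of generality $|A|\ge|B|\ge 2$, I would write $A=\{a_1<a_2<\cdots<a_m\}$ and $B=\{b_1<b_2<\cdots<b_n\}$ with $m\ge n\ge 2$. The $m$ sums $a_1+b_1<a_2+b_1<\cdots<a_m+b_1$ are distinct, and the extra sum $a_m+b_2$ satisfies $a_m+b_2>a_m+b_1\ge a_i+b_1$ for all $i$, so it is distinct from all of them. Hence $A+B$ contains at least $m+1$ elements, giving $|A+B|\ge m+1>m=\max(|A|,|B|)$. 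Monotonicity of the chosen sums is what forces the strict inequality, and exhibiting one guaranteed ``new'' element is the only delicate point.

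Finally I would assemble the pieces. Combining both halves yields the equivalence $|A+B|=\max(|A|,|B|)\iff\min(|A|,|B|)=1$, and applying it simultaneously to every edge $uv$ of $G$ converts the global condition that $f$ be weak into the edgewise condition that every edge have a mono-indexed end vertex, which settles the biconditional in both directions.
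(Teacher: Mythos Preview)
The paper does not supply its own proof of this lemma; it is quoted from \cite{GS1} and used as a black box, so there is nothing in the present paper to compare your argument against. Your set-theoretic reduction is correct and is in fact the standard argument: the equivalence $|A+B|=\max(|A|,|B|)\iff\min(|A|,|B|)=1$ is exactly what underlies the cited result, and your chain $a_1+b_1<\cdots<a_m+b_1<a_m+b_2$ cleanly produces the extra element needed for the strict inequality when both sets have at least two elements.

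One small remark on formulation: as literally stated, the lemma speaks of $G$ \emph{admitting} a weak IASI, but ``mono-indexed end vertex'' only makes sense relative to a fixed labeling. What your argument really establishes (and what is used throughout the paper) is the per-labeling statement: a given IASI $f$ is weak if and only if every edge has a mono-indexed end vertex under $f$. Your final sentence quantifies this existentially, which is fine but slightly weaker than the edgewise equivalence you actually prove; you may want to state the per-$f$ version explicitly, since that is the form invoked in all the sparing-number computations.
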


\begin{definition}{\rm
\cite{GS3} The cardinality of the labeling set of an element (vertex or edge) of a graph $G$ is called the {\em set-indexing number} of that element. An element (a vertex or an edge) of graph which has the set-indexing number $1$ is called a {\em mono-indexed element} of that graph.}
\end{definition}

\begin{definition}{\rm
\cite{GS3} The {\em sparing number} of a graph $G$ is defined to be the minimum number of mono-indexed edges required for $G$ to admit a weak IASI and is denoted by $\varphi(G)$.}
\end{definition}

\begin{theorem}\label{T-WSG}
\cite{GS3} A subgraph of weak IASI graph is also a weak IASI graph.
\end{theorem}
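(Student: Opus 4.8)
The plan is to prove that a subgraph of a weak IASI graph is also a weak IASI graph by invoking the characterization in the preceding lemma, which states that a graph admits a weak IASI if and only if every edge has at least one mono-indexed end vertex. So the strategy is entirely structural: rather than constructing a new labeling from scratch, I would show that the labeling witnessing the weak IASI on the larger graph restricts to a valid witness on any subgraph.

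First I would let $G$ be a weak IASI graph with weak IASI $f:V(G)\to\mathcal{P}(\mathbb{N}_0)$, and let $H$ be a subgraph of $G$. I would define $g$ to be the restriction of $f$ to $V(H)$, i.e.\ $g(v)=f(v)$ for every $v\in V(H)$. The key steps are then to verify that $g$ inherits the required injectivity and the weak condition. Since $f$ is injective on $V(G)$ and $V(H)\subseteq V(G)$, the restriction $g$ is automatically injective; likewise the induced edge function $g^+$ agrees with $f^+$ on $E(H)\subseteq E(G)$, so its injectivity follows from that of $f^+$. Thus $g$ is a legitimate IASI on $H$.

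The main content is checking the weak condition, and here the cleanest route is through the mono-indexed characterization. Every edge $uv\in E(H)$ is also an edge of $G$; because $f$ is a weak IASI on $G$, by the characterization lemma $uv$ has at least one mono-indexed end vertex under $f$. That end vertex lies in $V(H)$ and retains its set-indexing number $1$ under the restriction $g$, since $g(v)=f(v)$. Hence every edge of $H$ has at least one mono-indexed end vertex under $g$, and by the same characterization lemma (the ``if'' direction), $H$ admits a weak IASI.

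I do not anticipate a genuine obstacle here; the result is essentially a hereditary property that falls out immediately once the problem is reframed via the edge-wise mono-indexing criterion. The only point requiring a little care is to confirm that restricting the labeling does not disturb injectivity or alter the set-indexing numbers of the retained vertices, but both are immediate from the definition of restriction. One could alternatively argue directly from the definition of weak IASI without citing the characterization, verifying $|g^+(uv)|=\max(|g(u)|,|g(v)|)$ edge by edge, but routing through the characterization lemma makes the argument shorter and more transparent.
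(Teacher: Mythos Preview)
Your argument is correct: restricting the weak IASI $f$ to $V(H)$ preserves injectivity of both the vertex labeling and the induced edge labeling, and the weak condition (equivalently, the mono-indexed end vertex criterion) is inherited edge by edge. Note, however, that the present paper does not supply its own proof of this theorem; it is quoted from \cite{GS3} as background, so there is no in-paper argument to compare against. Your approach via the characterization lemma is the natural one and is presumably what the cited source does as well; the direct verification of $|g^{+}(uv)|=\max(|g(u)|,|g(v)|)$ that you mention as an alternative would be equally short, since $g^{+}$ literally coincides with $f^{+}$ on $E(H)$.
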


\begin{theorem}\label{T-WUC}
\cite{GS3} A graph $G$ admits a weak IASI if and only if $G$ is bipartite or it has at least one mono-indexed edge.
\end{theorem}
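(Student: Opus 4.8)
The plan is to reduce everything to the characterization recorded in the previous lemma (\cite{GS1}), namely that a graph admits a weak IASI precisely when every edge carries at least one mono-indexed end vertex; equivalently, the set of vertices whose set-indexing number exceeds $1$ must be independent. Throughout I will use the fact that, under a weak IASI, an edge $uv$ is mono-indexed if and only if both $u$ and $v$ are mono-indexed vertices, since $|f^+(uv)|=\max(|f(u)|,|f(v)|)$.

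For the sufficiency ($\Leftarrow$), I would exhibit a weak IASI in each case. If $G$ is bipartite with bipartition $(X,Y)$, I assign to the vertices of $X$ distinct singleton subsets of $\mathbb{N}_0$ and to the vertices of $Y$ distinct non-singleton subsets; every edge then has its $X$-endpoint mono-indexed, so the characterization lemma applies. The only routine checks are that $f$ is injective and that $f^+$ is injective, which can be arranged by drawing the underlying integers from a sufficiently spread-out (Sidon-type) set so that all the relevant sums are distinct. If instead $G$ is assumed to possess a mono-indexed edge, it suffices to produce any weak IASI, and the labeling in which every vertex receives a distinct singleton works: each edge then satisfies $|f^+(uv)|=1=\max(|f(u)|,|f(v)|)$, and injectivity of $f^+$ again follows from choosing the singletons so that all pairwise sums differ.

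For the necessity ($\Rightarrow$), suppose $G$ admits a weak IASI and is not bipartite; I must locate a mono-indexed edge. Since $G$ is non-bipartite it contains an odd cycle $C=v_1v_2\cdots v_{2k+1}v_1$. Assume, for contradiction, that no edge of $C$ is mono-indexed. By the characterization lemma every edge of $C$ has a mono-indexed endpoint, while our assumption forces every edge of $C$ to have a non-mono-indexed endpoint as well; hence along $C$ the vertices strictly alternate between mono-indexed and non-mono-indexed. Such an alternation is a proper $2$-colouring of $C$, which is impossible for a cycle of odd length. This contradiction yields a mono-indexed edge on $C$, which is what was required.

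I expect the parity obstruction on the odd cycle to be the crux: everything else is either a direct appeal to the characterization lemma or a construction whose only subtlety is guaranteeing injectivity of $f^+$. Care is needed to phrase the alternation argument so that it uses only the weak-IASI constraint together with the description of a mono-indexed edge, and to confirm that the bipartite construction never places non-singleton sets on two adjacent vertices, so that the labeling produced there is genuinely a weak IASI.
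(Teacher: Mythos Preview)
The paper does not supply its own proof of this statement: Theorem~\ref{T-WUC} is quoted from \cite{GS3} without argument, so there is nothing in the present paper to compare your proposal against. That said, your argument is sound. The necessity direction via the parity obstruction on an odd cycle---every edge has a mono-indexed end by the characterisation lemma, and if no edge were mono-indexed each edge would also have a non-mono-indexed end, forcing a proper $2$-colouring of an odd cycle---is the natural route and almost certainly how \cite{GS3} proceeds as well. One small remark on sufficiency: your all-singleton labelling already shows that \emph{every} finite graph admits a weak IASI, so the clause ``or it has at least one mono-indexed edge'' carries no real hypothesis in the $\Leftarrow$ direction; the substantive content of the theorem lies entirely in the $\Rightarrow$ direction, which you have handled correctly.
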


\begin{theorem}\label{T-SNBP}
\cite{GS3} The sparing number of a bipartite graph $G$ is $0$.
\end{theorem}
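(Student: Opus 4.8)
The plan is to exhibit an explicit weak IASI on $G$ under which no edge is mono-indexed; since $\varphi(G)$ is a minimum over all weak IASIs and is non-negative, producing one weak IASI with $0$ mono-indexed edges forces $\varphi(G)=0$. First I would record the key reformulation: for a weak IASI we have $|f^+(uv)|=\max(|f(u)|,|f(v)|)$, so an edge $uv$ is mono-indexed (set-indexing number $1$) precisely when \emph{both} of its end vertices are mono-indexed. Hence it suffices to find an IASI in which the set of mono-indexed vertices is an independent vertex cover of $G$: being a vertex cover guarantees (via the characterisation that every edge has at least one mono-indexed end) that $f$ is a weak IASI, and being independent guarantees that no edge has two mono-indexed ends, i.e.\ no edge is mono-indexed. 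A bipartite graph supplies such a cover for free, namely either part of its bipartition.

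Write $V(G)=X\cup Y$ for the bipartition, with $X=\{x_1,\dots,x_m\}$ and $Y=\{y_1,\dots,y_n\}$. I would label every vertex of $X$ by a singleton and every vertex of $Y$ by a $2$-element set, choosing the numbers so that both the vertex labels and the induced edge labels stay distinct. A convenient choice is
\[
f(x_i)=\{i\}\quad(1\le i\le m),\qquad f(y_j)=\{0,\,m+j\}\quad(1\le j\le n).
\]
All the $X$-labels are distinct singletons, all the $Y$-labels are distinct $2$-element sets, and a singleton can never equal a $2$-element set, so $f$ is injective.

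Then I would check that $f$ is a weak IASI with the required property. For an edge $x_iy_j$ we get $f^+(x_iy_j)=\{i\}+\{0,m+j\}=\{i,\;i+m+j\}$, a set of size $2$; since one end, $x_i$, is mono-indexed, $|f^+(x_iy_j)|=2=\max(|f(x_i)|,|f(y_j)|)$, so the weak condition holds and no edge is mono-indexed. The only genuine point to verify is injectivity of $f^+$. Here the numbers are engineered so that the smaller element of $f^+(x_iy_j)=\{i,i+m+j\}$ is $i$ (because $i\le m<i+m+j$) and the difference of its two elements is $m+j$; thus the edge label recovers both $i$ and $j$, whence distinct edges receive distinct labels. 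The main (and only) obstacle is precisely this simultaneous control of injectivity of $f^+$ while keeping every $Y$-label of size $2$; the separation trick above resolves it, completing the proof that $\varphi(G)=0$.
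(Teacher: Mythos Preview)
The paper does not actually prove this theorem; it is merely quoted from \cite{GS3} as background, so there is no in-paper proof to compare against. Your argument is correct: labelling one part of the bipartition by distinct singletons $\{i\}$ and the other by sets $\{0,m+j\}$ gives an injective $f$, the induced edge labels $\{i,\,i+m+j\}$ are pairwise distinct because the smaller element recovers $i$ and the difference recovers $j$, every edge has exactly one mono-indexed end (so $f$ is a weak IASI), and no edge label is a singleton. This is exactly the kind of ``singletons on one side, non-singletons on the other'' construction that the paper itself invokes repeatedly in later proofs (for instance in Theorem~\ref{T-CP-BP}), so your approach is fully in the spirit of the source even though the paper omits the details here.
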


\begin{theorem}\label{T-WUOC}
\cite{GS3} An odd cycle $C_n$ has a weak IASI if and only if it has at least one mono-indexed edge. 
\end{theorem}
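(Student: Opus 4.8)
The plan is to reduce the statement to the parity obstruction that separates odd cycles from even ones, and to lean on the characterization already available in this excerpt. The quickest route is to note that an odd cycle $C_n$ is not bipartite, so Theorem~\ref{T-WUC} immediately gives that $C_n$ admits a weak IASI if and only if it has at least one mono-indexed edge, which is precisely the claim. To keep the argument self-contained and to expose why oddness is the crucial hypothesis, however, I would argue directly from the lemma characterizing weak IASI graphs (every edge has at least one mono-indexed end vertex), combined with the elementary observation that, under a weak IASI, an edge $uv$ is mono-indexed exactly when both $f(u)$ and $f(v)$ are singleton sets.

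For the forward implication, I would let $f$ be a weak IASI on $C_n$ with $n$ odd and suppose, for contradiction, that $f$ has no mono-indexed edge. The characterizing lemma forces every edge to carry at least one mono-indexed end vertex, while the contradiction hypothesis forbids both ends of any edge from being mono-indexed; hence every edge of $C_n$ has \emph{exactly} one mono-indexed end vertex. Writing the cycle as $v_1 v_2 \cdots v_n v_1$ and recording for each $v_i$ whether it is mono-indexed, this ``exactly one per edge'' condition forces the mono-indexed and non-mono-indexed vertices to alternate around the cycle, i.e.\ it yields a proper $2$-colouring of $C_n$. Since an odd cycle admits no proper $2$-colouring, this is impossible, so $f$ must contain a mono-indexed edge.

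For the converse I would exhibit an explicit weak IASI on $C_n$ possessing a mono-indexed edge, thereby confirming existence: assign distinct singleton sets to two adjacent vertices, and to the remaining vertices assign distinct sets of cardinality at least $2$ arranged so that each such vertex is adjacent only to mono-indexed vertices, choosing the integers involved so that $f$ and $f^+$ stay injective. The construction is routine; the only point requiring care is to respect the alternating pattern that guarantees every edge has a mono-indexed end, which is possible precisely because the single ``defective'' mono-indexed edge is available to close up the odd cycle.

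The main obstacle---indeed the entire content of the theorem---is the parity step in the forward direction: recognizing that ``every edge has exactly one mono-indexed end vertex'' is the same as a proper $2$-colouring, and then invoking the non-bipartiteness of odd cycles. Everything else is bookkeeping with the definitions of weak IASI and of mono-indexed edges.
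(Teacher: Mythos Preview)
The paper does not actually prove Theorem~\ref{T-WUOC}; it is quoted from \cite{GS3} as a preliminary result, so there is no in-paper argument to compare against. Your reduction to Theorem~\ref{T-WUC} is valid and is the fastest way through, and your direct parity argument for the forward implication (exactly one mono-indexed end per edge forces a proper $2$-colouring of an odd cycle) is correct and clean.

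There is, however, a genuine slip in your converse construction. You write that you would ``assign distinct singleton sets to two adjacent vertices, and to the remaining vertices assign distinct sets of cardinality at least~$2$ arranged so that each such vertex is adjacent only to mono-indexed vertices.'' With only two mono-indexed vertices on the cycle this is impossible once $n\ge 5$: the $n-2$ non-singleton vertices then form a path of length at least~$2$, and any internal vertex of that path is adjacent to two non-singleton vertices, so some edge has \emph{no} mono-indexed end and the labeling fails to be a weak IASI. The ``alternating pattern'' you invoke in the next sentence is the right idea, but it requires $\frac{n+1}{2}$ mono-indexed vertices, not two: label $v_1,v_3,v_5,\ldots,v_n$ with distinct singletons and $v_2,v_4,\ldots,v_{n-1}$ with distinct non-singleton sets, so that the single mono-indexed edge is $v_nv_1$. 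You should rewrite the explicit description accordingly; as it stands the construction does not deliver a weak IASI for odd $n\ge 5$.
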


\begin{theorem}\label{T-NME}
\cite{GS3} Let $C_n$ be a cycle of length $n$ which admits a weak IASI, for a positive integer $n$. Then, $C_n$ has an odd number of mono-indexed edges when it is an odd cycle and has even number of mono-indexed edges, when it is an even cycle. 
\end{theorem}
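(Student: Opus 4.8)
The plan is to translate the weak-IASI condition into a statement about which vertices of $C_n$ are mono-indexed, and then count the mono-indexed edges directly. First I would recall that, under a weak IASI, every edge $uv$ satisfies $|f^+(uv)|=\max(|f(u)|,|f(v)|)$. Hence an edge has set-indexing number $1$ (is mono-indexed) if and only if $\max(|f(u)|,|f(v)|)=1$, that is, if and only if \emph{both} of its end vertices are mono-indexed. On the other hand, by the necessary and sufficient condition stated earlier in the excerpt, every edge of the weak-IASI graph $C_n$ has at least one mono-indexed end vertex, so no edge can have both of its end vertices of set-indexing number greater than $1$.

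The second step is to observe that the non-mono-indexed vertices of $C_n$ form an independent set. Indeed, if two adjacent vertices were both non-mono-indexed, the edge joining them would have no mono-indexed end vertex, contradicting the weak-IASI condition. Let $k$ denote the number of non-mono-indexed vertices. Since each vertex of $C_n$ has degree $2$ and the non-mono-indexed vertices are pairwise non-adjacent, the two edges incident with a given non-mono-indexed vertex are never shared with another such vertex; consequently the edges incident with these $k$ vertices are all distinct, giving exactly $2k$ edges that are \emph{not} mono-indexed.

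Since $C_n$ has $n$ edges in total, the number of mono-indexed edges equals $n-2k$. As $2k$ is even, $n-2k$ has the same parity as $n$: it is odd precisely when $n$ is odd and even precisely when $n$ is even, which is the assertion to be proved. The only delicate point is the counting in the second step, namely ensuring that no edge incident with a non-mono-indexed vertex is double-counted; this is exactly what the independence of the non-mono-indexed vertices guarantees, so I expect this to be the main (though minor) obstacle to make rigorous.
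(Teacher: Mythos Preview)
Your argument is correct. The key observations---that an edge is mono-indexed precisely when both endpoints are mono-indexed, and that the non-mono-indexed vertices form an independent set in $C_n$---are exactly right, and the count $n-2k$ then yields the parity claim immediately. Note, however, that the present paper does not supply its own proof of this theorem: it is quoted from \cite{GS3} as a background result, so there is no in-paper argument to compare against. Your proof is a clean, self-contained justification and would serve well as a replacement for the bare citation.
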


\begin{theorem}\label{T-WUG}
\cite{GS4} The graph $G_1\cup G_2$ admits a weak IASI if and only if both $G_1$ and $G_2$ are weak IASI graphs. 
\end{theorem}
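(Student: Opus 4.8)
The plan is to prove the two implications separately, the forward one being immediate from the hereditary nature of the weak IASI property and the reverse one requiring an explicit construction.

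For the ``only if'' direction I would simply observe that both $G_1$ and $G_2$ are subgraphs of $G_1\cup G_2$. Hence, if $G_1\cup G_2$ admits a weak IASI, then by Theorem \ref{T-WSG} each of $G_1$ and $G_2$, being a subgraph of a weak IASI graph, is itself a weak IASI graph. This disposes of one direction with no further work.

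The substance lies in the ``if'' direction. Assume $f_1$ and $f_2$ are weak IASIs on $G_1$ and $G_2$. My plan is to argue through the combinatorial characterisation of weak IASI graphs (every edge has at least one mono-indexed end vertex) rather than through the set labels directly. Under $f_i$ every edge of $G_i$ has a mono-indexed end, so the set $M_i$ of vertices mono-indexed by $f_i$ is a vertex cover of $G_i$. I would then declare a vertex of $G_1\cup G_2$ to be mono-indexed whenever it is mono-indexed in at least one factor containing it, leaving a vertex non-mono-indexed only when it is non-mono-indexed in every graph that contains it. A short argument then shows the resulting non-mono-indexed set is independent in $G_1\cup G_2$: any edge lies in some $E(G_i)$, where it already has an end in $M_i$, and that end is declared mono-indexed in the union. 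Equivalently, the declared mono-indexed set contains $M_1\cup M_2$ and is therefore a vertex cover of $G_1\cup G_2$. By the same characterising lemma, this cardinality pattern is exactly what a weak IASI on $G_1\cup G_2$ requires.

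It then remains to realise this cardinality pattern by an actual set-labeling. My plan is to discard the specific values of $f_1,f_2$ and build fresh sets of the prescribed cardinalities --- a singleton for each declared mono-indexed vertex and a set of its original cardinality for each declared non-mono-indexed vertex --- choosing the underlying integers to be sufficiently spread out (for instance translated apart by a large constant, or drawn from a Sidon-type set) so that $f$ is injective on vertices and the induced $f^+$ is injective on edges. I expect the main obstacle to be precisely this global injectivity of $f^+$ together with the bookkeeping at shared vertices: a vertex lying in both factors must receive a single label consistent with its declared cardinality, and one must verify that forcing such a vertex to be mono-indexed (when it is mono-indexed in either factor) never destroys the weak property on the other factor's edges. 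Since enlarging the pool of singletons can only help the mono-indexed-end-vertex condition, this verification goes through, and the spreading device secures the required injectivity, completing the construction.
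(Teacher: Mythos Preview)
The paper does not contain a proof of this theorem at all: it is stated in the introductory section with the citation \cite{GS4} and no argument is given. Every theorem in Section~1 is a quoted background result, and Theorem~\ref{T-WUG} is no exception. Consequently there is no ``paper's own proof'' against which your proposal can be compared.

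That said, your argument is sound on its own terms. The forward implication via Theorem~\ref{T-WSG} is exactly right. For the converse, reducing the problem to the vertex-cover reformulation of Lemma~1.4 is the natural move, and your observation that taking the union $M_1\cup M_2$ (and, at a shared vertex, declaring it mono-indexed whenever either factor does) yields a vertex cover of $G_1\cup G_2$ is correct; making more vertices mono-indexed can never violate the weak condition on any edge. The only part that needs genuine care is the realisation step --- producing an injective $f$ with injective $f^+$ --- and your sketch via large translates or a Sidon-type ground set is the standard way to secure both injectivities simultaneously. If you want to tighten that step, one clean device is to pick all singleton labels from a Sidon set $S$ and build each non-singleton label as a translate of a fixed two-element set by a much larger Sidon parameter, so that distinct edges have distinct sumsets by a size-and-translate argument.
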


\begin{theorem}\label{T-WKN}
\cite{GS3} The sparing number of a complete graph $K_n$ is $\frac{1}{2}(n-1)(n-2)$.
\end{theorem}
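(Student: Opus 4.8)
The plan is to reduce the problem to a statement about how many vertices of $K_n$ must carry a set-indexing number greater than $1$, and then to count the edges that are thereby forced to be mono-indexed. First I would record two observations that follow directly from the weak IASI condition. Since $|f^+(uv)| = \max(|f(u)|, |f(v)|)$, an edge $uv$ is mono-indexed precisely when both of its end vertices are mono-indexed; in particular, the mono-indexed edges of a graph are exactly the edges of the subgraph induced by its mono-indexed vertices. Moreover, by the characterisation lemma quoted earlier, a weak IASI requires every edge to have at least one mono-indexed end vertex, so the set of vertices whose set-indexing number exceeds $1$ must form an independent set.

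For the lower bound, I would apply the independence constraint to $K_n$. Because every two vertices of $K_n$ are adjacent, an independent set contains at most one vertex, so at most one vertex of $K_n$ can fail to be mono-indexed. Hence every weak IASI of $K_n$ has at least $n-1$ mono-indexed vertices, and these induce a complete subgraph on at least $n-1$ vertices. By the first observation each of the $\binom{n-1}{2} = \tfrac{1}{2}(n-1)(n-2)$ edges of this subgraph is mono-indexed, giving $\varphi(K_n) \ge \tfrac{1}{2}(n-1)(n-2)$.

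For the matching upper bound I would exhibit a weak IASI attaining this value. Assign distinct singleton sets to $n-1$ of the vertices and a $2$-element set to the remaining vertex $v$, choosing the integers so that $f$ is injective and the induced edge sums are all distinct. The only non-mono-indexed vertex is $v$, so exactly the edges not incident with $v$ are mono-indexed, and there are $\binom{n-1}{2}$ of them; the construction respects the weak IASI condition since every edge meets at least one of the $n-1$ mono-indexed vertices. This yields $\varphi(K_n) \le \tfrac{1}{2}(n-1)(n-2)$, and combined with the lower bound establishes the claimed equality.

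The main obstacle is the upper-bound construction: one must actually choose the labelling so that both $f$ and $f^+$ are injective while keeping all but one vertex mono-indexed. I expect this to be handled by selecting the singletons and the $2$-element set to be sufficiently spread out (for instance, choosing the labels so that the resulting sums grow rapidly enough to avoid coincidences), so that injectivity of $f^+$ on the $\binom{n}{2}$ edges is guaranteed; verifying this is routine but is the one place where care is needed.
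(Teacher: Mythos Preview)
Your argument is correct: the independence constraint forces at most one non-mono-indexed vertex in $K_n$, which immediately gives the lower bound $\binom{n-1}{2}$, and the explicit labeling with one $2$-element set and $n-1$ singletons achieves it. Your caveat about arranging injectivity of $f^+$ is the right place to be careful, and it is indeed routine (e.g., take the singletons from a Sidon-type set and the $2$-element label far enough away).

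However, there is nothing to compare against in this paper. Theorem~\ref{T-WKN} is quoted from \cite{GS3} as background material in the introduction and is not proved here; the paper only \emph{uses} it later (for instance, in the proofs concerning $K_m\odot K_n$, $P_m\odot K_n$, etc.). So your proposal is not an alternative to the paper's proof but rather a self-contained justification of a result the paper takes for granted. If you want to compare approaches, you would need to consult \cite{GS3} directly; that said, the argument you give is the natural one and is almost certainly what appears there.
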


The admissibility of weak IASI by certain graph products is established in \cite{GS7}.  In this paper, our discussion is about the sparing number of a particular product, called corona, of two weak IASI graphs. 
 
\section{Corona of Weak IASI Graphs}

\begin{definition}\label{D-5.1}{\rm
\cite{FFH} The {\em corona} of two graphs $G_1$ and $G_2$, denoted by $G_1\odot G_2$, is the graph obtained by taking one copy of $G_1$ (which has $n_1$ vertices) and $n_1$ copies of $G_2$ and then joining the $i$-th point of $G_1$ to every point in the $i$-th copy of $G_2$. The number of vertices and edges in $G_1\odot G_2$ are $n_1(1+n_2)$ and $m_1+n_1m_2+n_1n_2$ respectively, where $n_i$ and $m_i$ are respectively the order and size of the graph $G_i, i=1,2$.}
\end{definition}

In the corona $G_1\odot G_2$ of given graphs $G_1$ and $G_2$, we take $|V(G_2)|$ layers or copies of $G_2$ and to establish the adjacency between all vertices of each copy to the corresponding vertex of $G_1$. The weak IASIs of $G_1$ and $G_2$ may not induce a weak IASI for $G_1\odot G_2$. Hence, we have to define an IASI independently for a graph product. 

The following theorem establishes a necessary and sufficient condition for the corona of two  weak IASI graphs to admit a weak IASI.

\begin{theorem}\label{T-NMIEG}
\cite{GS7} Let $G_1$ and $G_2$ be two weak IASI graphs on $m$ and $n$ vertices respectively. Then, $G_1\odot G_2$ admits a weak IASI if and only if either $G_1$ is $1$-uniform or it has $r$ copies of $G_2$ that are 1-uniform, where $r$ is the number of vertices in $G_1$ that are not mono-indexed. 
 
\end{theorem}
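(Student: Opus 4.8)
The plan is to reduce everything to the characterization of weak IASI graphs stated earlier: a graph admits a weak IASI if and only if every edge has at least one mono-indexed end vertex. I would first classify the edges of $G_1\odot G_2$ into three types: the edges lying inside the single copy of $G_1$, the edges lying inside each of the $m$ copies of $G_2$, and the \emph{corona edges} joining the $i$-th vertex $v_i$ of $G_1$ to every vertex of the $i$-th copy of $G_2$. Since $G_1$ and $G_2$ are themselves weak IASI graphs, Theorem~\ref{T-WSG} guarantees that the first two families of edges can always be realised with a mono-indexed end, so the entire question collapses onto the corona edges.

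For the necessity, I would begin with an arbitrary weak IASI $f$ on $G_1\odot G_2$ and examine the vertices of $G_1$ under $f$. If every vertex of $G_1$ is mono-indexed, then the restriction of $f$ to $G_1$ is $1$-uniform and we are in the first alternative. Otherwise, let $v_1,\dots,v_r$ be the vertices of $G_1$ that are not mono-indexed, so $r\ge 1$. For each such $v_i$ and each vertex $u$ of the $i$-th copy of $G_2$, the corona edge $v_iu$ must carry a mono-indexed end; as $v_i$ is not mono-indexed, $u$ is forced to be mono-indexed. Since this holds for \emph{every} vertex $u$ of the $i$-th copy, that copy must be $1$-uniform, which yields exactly $r$ copies of $G_2$ that are $1$-uniform.

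For the sufficiency, I would construct the labeling in each of the two cases. If $G_1$ is given a $1$-uniform IASI, assign to every copy of $G_2$ a weak IASI (which exists because $G_2$ is a weak IASI graph); then every corona edge $v_iu$ automatically has the mono-indexed end $v_i$. If instead $G_1$ is labelled with $r$ non-mono-indexed vertices and the $r$ corresponding copies of $G_2$ are made $1$-uniform (the remaining copies carrying an arbitrary weak IASI), then every corona edge again has a mono-indexed end: when $v_i$ is mono-indexed this is immediate, and when $v_i$ is one of the $r$ exceptional vertices its whole copy of $G_2$ is mono-indexed. In both cases the defining identity $|f^+(uv)|=\max(|f(u)|,|f(v)|)$ is met on a corona edge because a singleton end gives $|\{a\}+B|=|B|$, so the max-condition holds on every edge.

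The step I expect to be the main obstacle is the final assembly: ensuring global injectivity of $f$ on the vertices and of $f^+$ on the edges once the copy of $G_1$ and the $m$ copies of $G_2$ are merged, since each corona edge links a $G_1$-vertex to a $G_2$-copy vertex and its sum set must avoid all the others. I would dispose of this by translating the integer labels of $G_1$ and of the successive copies of $G_2$ into pairwise disjoint, widely separated intervals of $\mathbb{N}_0$; translation leaves every set-indexing number unchanged, so the weak IASI property within each piece is preserved, while the separation rules out any coincidence among vertex labels or among edge sum sets. This leaves only routine bookkeeping to complete the argument.
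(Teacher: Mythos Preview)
The paper does not actually prove this theorem: it is quoted from \cite{GS7} and stated without proof, so there is no in-paper argument to compare your proposal against.

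That said, your proposal is a sound reconstruction of what the intended proof must look like. The reduction to the corona edges via the ``every edge has a mono-indexed end'' criterion is exactly right, and your necessity argument is clean: a non-mono-indexed $v_i$ forces every vertex of the $i$-th copy of $G_2$ to be mono-indexed through the corona edges, hence that copy is $1$-uniform. Your sufficiency construction is also correct in spirit, and you are right to flag global injectivity of $f$ and $f^+$ as the only nontrivial bookkeeping; the translation trick you propose (pushing each piece into its own far-separated block of $\mathbb{N}_0$) is the standard device and works because shifting a set by a constant preserves its cardinality and hence the weak-IASI condition. One small point worth tightening: in the necessity direction you should make explicit that the dichotomy in the theorem is really about the \emph{given} labeling restricted to $G_1$ (since every graph trivially admits a $1$-uniform IASI in the abstract), but your argument already implicitly reads it that way.
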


In view of Theorem \ref{T-NMIEG}, we examine the sparing number of the corona of certain graph structures. First, we consider the corona of two paths. The following result determines the sparing number $P_m \odot P_n$.

\begin{theorem}\label{T-NMEPcP}
The sparing number of the corona $P_m\odot P_n$ is 
\begin{equation*}
\varphi(P_m\odot P_n)=
\begin{cases}
\frac{1}{4}(m+1)(3n+1) & ~~\text{if  both $m$ and $n$ are odd}\\ 
\frac{3}{4}(m+1)n & ~~\text{if  both $m$ is odd and $n$ is even}\\ 
\frac{1}{4}(3mn+2n+m) & ~~\text{if  both $m$ is even and $n$ is odd}\\ 
\frac{1}{4}n(3m+4) & ~~\text{if  both $m$ and $n$ are even}\\ 
\end{cases}
\end{equation*}
\end{theorem}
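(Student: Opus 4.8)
The plan is to recast the sparing number as a purely combinatorial optimisation and then exploit the layered structure of the corona. By the Lemma characterising weak IASI graphs, specifying a weak IASI on \(P_m\odot P_n\) amounts to choosing the set \(S\) of non-mono-indexed vertices, subject only to \(S\) being independent (every edge must retain a mono-indexed end). An edge is mono-indexed precisely when neither of its ends lies in \(S\), so the number of mono-indexed edges equals \(|E(P_m\odot P_n)|-\sum_{v\in S}\deg(v)\), the sum being exact because the independence of \(S\) prevents any edge from being counted twice. Hence \(\varphi(P_m\odot P_n)=|E(P_m\odot P_n)|-\max_S\sum_{v\in S}\deg(v)\), the maximum ranging over independent sets \(S\), where \(|E(P_m\odot P_n)|=2mn-1\) by Definition \ref{D-5.1}. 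I would first sanity-check this reduction against Theorem \ref{T-WKN}, where it correctly reproduces \(\varphi(K_n)=\tfrac12(n-1)(n-2)\).

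Next I would fix notation: write \(v_1,\dots,v_m\) for the base copy of \(P_m\) and \(u^i_1,\dots,u^i_n\) for the \(i\)-th copy of \(P_n\), so that \(\deg(v_i)=n+\deg_{P_m}(v_i)\) and \(\deg(u^i_j)=1+\deg_{P_n}(u^i_j)\). Theorem \ref{T-NMIEG} supplies the crucial rigidity: if \(v_i\in S\), then the whole \(i\)-th copy must be mono-indexed, i.e.\ disjoint from \(S\). This decouples the optimisation copy by copy. For each index \(i\) there are exactly two regimes: either \(v_i\in S\), contributing \(\deg(v_i)\) and forbidding the \(i\)-th copy; or \(v_i\notin S\), in which case the best contribution of the \(i\)-th copy is the maximum of \(\sum_{u\in S_i}\deg(u)\) over independent sets \(S_i\) of that copy, namely a weighted maximum-independent-set problem on \(P_n\) with end-weight \(2\) and interior-weight \(3\). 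Call this optimum \(W_n\).

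The heart of the computation is a two-level parity analysis. First I would evaluate \(W_n\) by the standard path dynamic programme; since the weights form an almost-constant alternating pattern, the all-odd-position choice is optimal and yields closed forms that split according to the parity of \(n\). The benefit of placing \(v_i\) in \(S\) rather than using its copy is then \(\deg(v_i)-W_n\), which takes one value at the two ends \(v_1,v_m\) of \(P_m\) and another at the interior vertices, and whose sign is governed by the parity of \(n\). Substituting these benefits reduces the global problem to a single weighted maximum-independent-set problem on the base path \(P_m\), whose value in turn depends on the parity of \(m\). Running the four combinations of parities of \(m\) and \(n\), while carefully accounting for the boundary vertices \(v_1,v_m\) and \(u^i_1,u^i_n\), is what produces the four branches of the stated formula.

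I expect two delicate points. The genuine obstacle is the lower bound: one must show that no independent set beats the one exhibited, i.e.\ that the weighted-independent-set optima on both \(P_n\) and \(P_m\) are exactly as claimed in each parity class. The boundary corrections at the ends of the two paths are precisely what generate the additive constants in the formula, so mishandling a single end vertex shifts a constant; the parity bookkeeping here is the path analogue of the cycle counting in Theorem \ref{T-NME}. The second, more routine, point is to confirm that the chosen \(S\) is realisable by a bona fide weak IASI, i.e.\ that the labelling sets can be selected so that both \(f\) and \(f^+\) are injective; this is arranged by spreading the sets sufficiently far apart in \(\mathbb{N}_0\), but it should be recorded explicitly so that the upper-bound construction is legitimate.
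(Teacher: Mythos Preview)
Your optimisation framework is correct and strictly more rigorous than the paper's argument, which merely exhibits one particular labelling (alternate singleton/non-singleton on the base $P_m$, forcing half the copies of $P_n$ to be $1$-uniform) and counts its mono-indexed edges, never addressing the lower bound at all. There is, however, a convention slip: in this paper $P_m$ denotes the path of \emph{length} $m$, hence with $m+1$ vertices; your edge count $2mn-1$ and your base indexing $v_1,\dots,v_m$ are therefore off by one throughout, and with the paper's convention $|E(P_m\odot P_n)|=m+(m+1)(2n+1)$.

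The substantive gap is your assertion that the sign of $\deg(v_i)-W_n$ is ``governed by the parity of $n$''. It is not. With the paper's convention one computes $W_n=\tfrac12(3n+1)$ for $n$ odd and $W_n=\tfrac12(3n+2)$ for $n$ even, while $\deg(v_i)=n+3$ at interior base vertices and $n+2$ at the two ends; hence the benefit $\deg(v_i)-W_n$ equals $\tfrac12(5-n)$ (interior) and $\tfrac12(3-n)$ (end) for $n$ odd, and $\tfrac12(4-n)$, $\tfrac12(2-n)$ for $n$ even. These are controlled by the \emph{magnitude} of $n$, not its parity, and are non-positive as soon as $n\ge4$. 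For such $n$ the optimal independent set therefore takes no base vertex at all, contrary to the paper's alternating scheme. Carried out correctly your method does \emph{not} reproduce the stated formula: for instance at $m=1$, $n=5$ it gives $7$ mono-indexed edges (make every $v_i$ a singleton and in each copy put non-singletons at positions $2,4,6$), whereas the theorem claims $\tfrac14\cdot2\cdot16=8$; at $m=3$, $n=5$ it gives $15$ versus the claimed $16$. In short, your programme, properly executed, would show that the displayed formula is only an upper bound once $n\ge4$ and is not the sparing number; the four-way parity split you anticipate never materialises.
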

\begin{proof}
Let $P_m$ and $P_n$ be two paths of lengths $m$ and $n$ respectively. Note that $P_m$ has $m+1$ vertices and $P_n$ has $n+1$ vertices. Label the vertices of $P_m$ alternately by distinct singleton sets and distinct non-singleton sets. Then, the number of mono-indexed edges in $P_m$ is $0$. Now, the number of mono-indexed vertices in $P_m$ is $\frac{m+1}{2}$ if $m$ is odd and $\frac{m}{2}$ if $m$ is even. Hence, there are the following cases.

\noindent {\em Case 1:} Let $m$ be odd.  Then, exactly $\frac{m+1}{2}$ copies of $P_n$ must be $1$-uniform and the vertices of the remaining $\frac{m+1}{2}$ copies of $P_n$ can be labeled alternately by distinct singleton and distinct non-singleton set-labels. In this context, we have the following subcases.

\noindent {\em Subcase 1.1:} Let $n$ be an odd integer. Here, $\frac{m+1}{2}$ copies of $P_n$ are $1$-uniform. In the remaining $\frac{m+1}{2}$ copies, exactly $\frac{n+1}{2}$ vertices are mono-indexed. Then, $\frac{n+1}{2}$ edges between $P_m$ and such a copy of $P_n$ are mono-indexed. Therefore, the number of mono-indexed edges is $\frac{m+1}{2}n +\frac{(m+1)}{2}.\frac{(n+1)}{2} = \frac{1}{4}(m+1)(3n+1)$.

\noindent {\em Subcase 1.2:} Let $n$ be an even integer. Here, $\frac{m+1}{2}$ copies of $P_n$ are $1$-uniform and in the remaining $\frac{m+1}{2}$ copies, $\frac{n}{2}$ vertices are mono-indexed. Therefore, $\frac{n}{2}$ edges between $P_m$ and such a copy of $P_n$ are  mono-indexed. Hence, the number of mono-indexed edges is $\frac{m+1}{2}n+\frac{m+1}{2}.\frac{n}{2} = \frac{3}{4}(m+1)n$.

\noindent {\em Case 2:} Let $m$ be even.  Then, exactly $\frac{m}{2}$ copies of $P_n$ must be $1$-uniform and the vertices of the remaining $\frac{m+2}{2}$ copies of $P_n$ can be labeled alternately by distinct singleton and distinct non-singleton set-labels. Here, we have the following subcases.

\noindent {\em Subcase 2.1:} Let $n$ be an odd integer. Here, $\frac{m}{2}$ copies of $P_n$ are $1$-uniform. In the remaining $\frac{m+2}{2}$ copies, exactly $\frac{n+1}{2}$ vertices are mono-indexed. Then, $\frac{n+1}{2}$ edges between $P_m$ and such a copy of $P_n$ are mono-indexed. Therefore, the number of mono-indexed edges is $\frac{m}{2}n +\frac{(m+2)}{2}.\frac{(n+1)}{2} = \frac{1}{4}(3mn+2n+m)$.

\noindent {\em Subcase 2.2:} Let $n$ be an even integer. Here, $\frac{m+2}{2}$ copies of $P_n$ are $1$-uniform and in the remaining $\frac{m}{2}$ copies, $\frac{n}{2}$ vertices are mono-indexed. Therefore, $\frac{n}{2}$ edges between $P_m$ and such a copy of $P_n$ are  mono-indexed. Therefore, the number of mono-indexed edges is $\frac{m+2}{2}n+\frac{m}{2}.\frac{n}{2} = \frac{1}{4}n(3m+4)$.
\end{proof}

Next, we discuss the corona of two graphs in which one is a cycle. First, recall the following remark.

\begin{remark}\label{R-NMEC}{\rm
\cite{GS3} Due to Theorem \ref{T-WUOC}, we observe that the number of mono-indexed vertices in the cycle $C_n$ of length $n$ is at least $\frac{(n+1)}{2}$ if $n$ is odd and is at least $\frac{n}{2}$, if $n$ is even.}
\end{remark}

By Theorem \ref{T-NMIEG}, The corona of a cycle $C_m$ and a path $P_n$, denoted by $C_m\odot P_n$, admits a weak IASI if and only if either $C_m$ is $1$-uniform or $r$ copies of $P_n$ that are $1$-uniform, where $r$ is the number of vertices of $C_m$ that are not mono-indexed and $P_n\odot C_m$ admits a weak IASI if and only if either $P_n$ is $1$-uniform or $r$ copies of $C_m$ are $1$-uniform, where $r$ is the number of vertices of $P_n$ that are not mono-indexed. Hence, we have

\begin{theorem}
The sparing number of $C_m \odot P_n$ is
\begin{equation*}
\varphi(C_m\odot P_n)=
\begin{cases}
\frac{3}{4}mn & ~~\text{if  both $m$ and $n$ are odd}\\ 
\frac{1}{4}m(3n+1) & ~~\text{if  both $m$ is odd and $n$ is even}\\ 
1+\frac{1}{4}m(3m-1) & ~~\text{if  both $m$ is even and $n$ is odd}\\ 
\frac{1}{4}[n(3m+1)+(m+5)] & ~~\text{if  both $m$ and $n$ are even}\\ 
\end{cases}
\end{equation*} 
\end{theorem}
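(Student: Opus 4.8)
The plan is to follow the template of Theorem~\ref{T-NMEPcP}, replacing the spine $P_m$ by the cycle $C_m$ and using Theorem~\ref{T-NMIEG} together with Remark~\ref{R-NMEC} to govern the cycle. First I would record that, in any weak IASI of $C_m\odot P_n$, the set of mono-indexed vertices of the central cycle must form a vertex cover of $C_m$ (every edge needs a mono-indexed end), so its complement is an independent set of some size $r$; write $a=m-r$ for the number of mono-indexed cycle vertices. Since the non-mono-indexed cycle vertices are pairwise non-adjacent, each contributes both of its incident cycle edges to the cover, so the number of mono-indexed edges lying inside $C_m$ is exactly $m-2r$. Theorem~\ref{T-NMIEG} then forces each of the $r$ copies of $P_n$ attached to a non-mono-indexed cycle vertex to be $1$-uniform.

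Next I would tally the three sources of mono-indexed edges. A copy attached to a non-mono-indexed cycle vertex is $1$-uniform, so all $n$ of its path edges are mono-indexed, while none of its corona edges are (their cycle end is not mono-indexed); this contributes $rn$. A copy attached to a mono-indexed cycle vertex may be labeled alternately, its mono-indexed vertices forming a minimum vertex cover of $P_n$ of size $\lfloor(n+1)/2\rfloor$ chosen to be independent, so it carries no mono-indexed path edge but exactly $\lfloor(n+1)/2\rfloor$ mono-indexed corona edges, one to each mono-indexed copy vertex; this contributes $a\lfloor(n+1)/2\rfloor$. Adding the cycle term, the total number of mono-indexed edges is $(m-2r)+rn+a\lfloor(n+1)/2\rfloor$, which I would then evaluate by fixing the labeling and splitting into the four parity classes of $(m,n)$.

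For the explicit construction (the upper bound) I would take the extremal cycle labeling. For $m$ even, color $C_m$ alternately, giving $r=a=\tfrac{m}{2}$ and no mono-indexed cycle edge (as in Theorem~\ref{T-SNBP}). For $m$ odd, Theorems~\ref{T-WUOC} and~\ref{T-NME} force an odd, hence positive, number of mono-indexed cycle edges, and Remark~\ref{R-NMEC} guarantees at least $\tfrac{m+1}{2}$ mono-indexed cycle vertices; so I would use a minimum vertex cover of size $\tfrac{m+1}{2}$ realizing exactly one mono-indexed cycle edge, i.e.\ $r=\tfrac{m-1}{2}$. Substituting these values of $r$ and $a$, together with the parity value of $\lfloor(n+1)/2\rfloor$, into the total above, and checking via Theorem~\ref{T-NMIEG} that the scheme is a genuine weak IASI, yields the claimed expression in each case.

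The hard part is the matching lower bound, i.e.\ optimality of this labeling. Here I would regard the total as a function of the independent-set size $r$, namely $m\bigl(1+\lfloor(n+1)/2\rfloor\bigr)+r\bigl(n-2-\lfloor(n+1)/2\rfloor\bigr)$, and minimize over all admissible $r$. The delicate point is that the coefficient of $r$ is precisely the difference between the per-copy cost $\lfloor(n+1)/2\rfloor$ of keeping a cycle vertex mono-indexed and the cost $n$ of making it non-mono-indexed, net of the two cycle edges thereby freed; its sign is what decides whether the minimum-vertex-cover labeling (maximal $r$) or the $1$-uniform cycle (r=0) is the true optimizer. One must therefore pin down this sign against the size of $n$, reconcile it with the odd-cycle constraint of Remark~\ref{R-NMEC}, and argue that no alternative distribution of mono-indexed vertices on the cycle or within the copies does better. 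Completing this optimization, rather than the counting for any single configuration, is the principal obstacle.
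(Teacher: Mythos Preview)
Your upper-bound construction is precisely what the paper does: fix the alternating labeling on $C_m$ (with one extra mono-indexed vertex when $m$ is odd), force the $r$ copies of $P_n$ attached to non-mono-indexed cycle vertices to be $1$-uniform, label the remaining copies alternately, and count. The paper's proof in fact \emph{stops at that count}; it never argues that this labeling is optimal. So the lower-bound analysis you set up, writing the total as a linear function of the independent-set size $r$, already goes further than the paper.

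The ``principal obstacle'' you flag, however, is not a technicality that can be pushed through --- it is a genuine obstruction to the theorem as stated. Your own coefficient $n-2-\lfloor(n+1)/2\rfloor$ equals $n/2-2$ for $n$ even and $(n-5)/2$ for $n$ odd; it is strictly positive once $n\ge 6$. In that range the minimum over $r$ is attained at $r=0$ (the $1$-uniform cycle), not at the paper's maximal $r$, and the two differ. Concretely, for $C_4\odot P_6$ the paper's labeling yields $\tfrac{3}{4}\cdot4\cdot6=18$ mono-indexed edges, whereas taking every cycle vertex mono-indexed and labeling all four copies of $P_6$ alternately gives $4+4\cdot\lfloor7/2\rfloor=16$. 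Hence the displayed formula is not the sparing number for $n\ge 6$, and no proof of the stated result can be completed there. For $n\le 5$ the coefficient is $\le 0$, the paper's labeling is optimal, and your plan --- once you justify that $\lfloor(n+1)/2\rfloor$ is the true per-copy minimum at a mono-indexed cycle vertex --- does supply the missing lower bound.
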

\begin{proof}
Consider a cycle $C_m$ and a path $P_n$. Note that the path $P_n$ has $m+n$ vertices, while $C_m$ has $m$ vertices. Here, we consider the following cases.

\noindent {\em Case-1:} Let $m$ is even. Then, exactly $\frac{m}{2}$ vertices of $C_m$ have non-singleton set-label. Therefore, by Theorem \ref{T-NMIEG}, $C_m\odot P_n$ contains $\frac{m}{2}$ copies of $P_n$ that are $1$-uniform. The remaining $\frac{m}{2}$ copies of $P_n$ can be labeled alternately by distinct singleton and non-singleton sets. For these $\frac{m}{2}$ copies, we have the following subcases.

\noindent {\em Subcase-1.1:} Let $n$ be an even integer. Then, $\frac{n+2}{2}$ vertices can be labeled by non-singleton set-labels and the remaining $\frac{n}{2}$ vertices have singleton set-labels. Therefore, there are $\frac{n}{2}$ mono-indexed edges between $C_m$ and these copies of $P_n$. Hence, the total number of mono-indexed edges in $C_m\odot P_n$ is $\frac{m}{2}n+\frac{m}{2}\frac{n}{2}= \frac{3}{4}mn$.

\noindent {\em Subcase-1.2:} Let $n$ be an odd integer. Then, for each copy of $P_n$ corresponding to a mono-indexed vertex of $C_m$, there are $\frac{n+1}{2}$ vertices that can be labeled by non-singleton sets and $\frac{n+1}{2}$ vertices can be labeled by singleton sets. Therefore, there are $\frac{n+1}{2}$ mono-indexed edges between $C_m$ and these copies of $P_n$. Hence, the total number of mono-indexed edges in $C_m\odot P_n$ is 
$\frac{m}{2}n+\frac{m}{2}.\frac{n+1}{2}= \frac{1}{4}m(3n+1)$.

\noindent {\em Case-2:} Let $m$ is odd. Then, by Theorem \ref{T-WUOC}, the cycle $C_m$ has at least one mono-indexed edge. That is, $C_m$ has $\frac{m-1}{2}$ vertices that are not mono-indexed and $\frac{m+1}{2}$ mono-indexed vertices. Then, $\frac{m-1}{2}$ copies of $P_n$ are $1$-uniform and the vertices of the corresponding copies of remaining $\frac{m+1}{2}$ vertices of $C_m$, can be labeled alternately by singleton sets and non-singleton sets. In this case also, we have two options.

\noindent {\em Subcase-2.1:} Let $n$ be an even integer. Then, as explained in the above case, $\frac{n}{2}$ vertices have singleton set-labels and the remaining $\frac{n+2}{2}$ vertices can be labeled by non-singleton set-labels. Therefore, there are $\frac{n}{2}$ mono-indexed edges between $C_m$ and these copies of $P_n$. Hence, the total number of mono-indexed edges in $C_m \odot P_n$ is $1+\frac{m-1}{2}n+\frac{m+1}{2}.\frac{n}{2}=1+\frac{1}{4}n(3m-1)$.

\noindent {\em Subcase-2.2:} Let $n$ be an odd integer. Then, for each copy of $P_n$ corresponding to a mono-indexed vertex of $C_m$, there are $\frac{n+1}{2}$ vertices that can be mono-indexed and the remaining $\frac{n+1}{2}$ vertices can be labeled by non-singleton sets. Therefore, there are $\frac{n+1}{2}$ mono-indexed edges between $C_m$ and these copies of $P_n$. Hence, the total number of mono-indexed edges in $C_m\odot P_n$ is $1+\frac{m-1}{2}n+\frac{m+1}{2}.\frac{n+1}{2} =\frac{1}{4}[n(3m+1)+(m+5)]$
\end{proof}

Since $C_m\odot P_n$ and $P_n\odot C_m$ are non-isomorphic graphs, the sparing numbers of these graphs are also different. Therefore, we prove the following theorem for the corona $P_n\odot C_m$.

\begin{theorem}
The sparing number of $P_n\odot C_m$ is
\begin{equation*}
\varphi(P_n\odot C_m)=
\begin{cases}
\frac{3}{4}(m+1)(n+1) & ~~\text{if  both $m$ and $n$ are odd}\\ 
\frac{3}{4}m(n+1) & ~~\text{if  both $m$ is even and $n$ is odd}\\ 
\frac{1}{4}[3n(m+1)+2(m+3)] & ~~\text{if  both $m$ is odd and $n$ is even}\\ 
\frac{1}{4}m(3n+2) & ~~\text{if  both $m$ and $n$ are even}. 
\end{cases}
\end{equation*} 
\end{theorem}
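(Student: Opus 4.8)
The plan is to mirror the argument already used for $C_m\odot P_n$, but with the roles reversed, keeping careful track of the three kinds of edges that occur in the corona. Write the base path as $P_n=u_0u_1\cdots u_n$ (so it has $n+1$ vertices), and for each $i$ let $C_m^{(i)}$ be the $i$-th copy of the cycle, every vertex of which is joined to $u_i$. The edges of $P_n\odot C_m$ then fall into three classes: the $n$ path edges, the $(n+1)m$ cycle edges inside the copies, and the $(n+1)m$ ``spoke'' edges $u_iv$ with $v\in C_m^{(i)}$. Since a graph admits a weak IASI exactly when every edge has a mono-indexed end, the non-mono-indexed vertices must form an independent set and an edge is mono-indexed iff \emph{both} of its ends are mono-indexed; hence $\varphi$ is the minimum, over all such labellings, of the number of edges with both ends mono-indexed, summed over the three classes.

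First I would fix the labelling on the base path. Because $P_n$ is bipartite, Theorem \ref{T-SNBP} lets me two-colour it so that no path edge is mono-indexed; let $r$ denote the number of non-mono-indexed vertices of $P_n$ under this colouring. By Theorem \ref{T-NMIEG} each copy $C_m^{(i)}$ sitting on a non-mono-indexed $u_i$ is forced to be $1$-uniform, so all $m$ of its cycle edges are mono-indexed while none of its spokes are; such a copy contributes exactly $m$ mono-indexed edges. For a copy sitting on a mono-indexed $u_i$ I would choose its non-mono-indexed vertices to be a \emph{maximum} independent set of $C_m$; then the number of mono-indexed spokes equals the number of mono-indexed cycle vertices, and the number of mono-indexed cycle edges is controlled by the parity of $m$.

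The per-copy cost $c_m$ for a copy on a mono-indexed vertex is computed from the structure of $C_m$. When $m$ is even the cycle is properly $2$-colourable (Theorem \ref{T-SNBP}), so a minimum cover uses $\tfrac{m}{2}$ vertices and leaves no mono-indexed cycle edge, giving $c_m=\tfrac{m}{2}$. When $m$ is odd, Theorem \ref{T-WUOC} together with Theorem \ref{T-NME} forces at least one mono-indexed cycle edge, while a minimum cover needs $\tfrac{m+1}{2}$ vertices by Remark \ref{R-NMEC}, so $c_m=\tfrac{m+1}{2}+1=\tfrac{m+3}{2}$. Summing, the total number of mono-indexed edges equals $r\,m+(n+1-r)\,c_m$. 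Taking $r=\tfrac{n+1}{2}$ when $n$ is odd and $r=\tfrac{n}{2}$ when $n$ is even (i.e.\ letting the smaller colour class of $P_n$ be the non-mono-indexed one) and substituting the four parity combinations of $(m,n)$ reduces, after routine algebra, to the four displayed expressions; this substitution is the purely computational step.

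The real work lies in the lower bound, i.e.\ in showing that no weak IASI uses fewer mono-indexed edges than the labelling above. Copy-by-copy this looks tractable: a copy on a non-mono-indexed vertex is forced $1$-uniform and cannot cost below $m$, while a copy on a mono-indexed vertex has at most $\lfloor m/2\rfloor$ non-mono-indexed vertices by independence in $C_m$, and the parity constraint of Theorem \ref{T-NME} then bounds its mono-indexed cycle edges below, so each copy costs at least $c_m$. The delicate point I would scrutinise most carefully is the global optimisation over $r$: making a path vertex non-mono-indexed eliminates its spokes but forces its entire copy to be $1$-uniform, so one must verify that the stated $r$ genuinely minimises $r\,m+(n+1-r)\,c_m$ against competing choices — in particular against making $P_n$ itself $1$-uniform ($r=0$), which pays $n$ extra mono-indexed path edges but replaces each forced copy of cost $m$ by a cheaper one of cost $c_m$. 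Since the alternating labelling beats this rival only when $m-c_m$ is small, this trade-off between the base-path labelling and the forced copies is exactly where I expect the main obstacle to sit, and it is the step that most needs a rigorous minimisation argument rather than an appeal to the natural alternating labelling.
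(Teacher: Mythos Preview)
Your upper-bound construction is essentially identical to the paper's: two-colour the base path so that no path edge is mono-indexed, force the copies sitting on non-mono-indexed base vertices to be $1$-uniform, two-colour the remaining copies optimally, and then count in the four parity cases. The paper's proof is exactly this computation and nothing more---it presents the construction and asserts the resulting count is $\varphi(P_n\odot C_m)$ without any lower-bound argument.

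You go further than the paper by raising the optimality question, and the specific concern you flag is not hypothetical: it actually defeats the stated formula. Take $n=3$, $m=4$ (so $n$ odd, $m$ even). The claimed value is $\tfrac34\,m(n+1)=12$. But taking $r=0$ (i.e.\ making $P_n$ entirely $1$-uniform) costs $3$ mono-indexed path edges while every copy of $C_4$ now sits on a mono-indexed base vertex and contributes only $c_4=\tfrac m2=2$; the total is $3+4\cdot 2=11<12$. In your notation the difference between the alternating labelling and the $r=0$ labelling is $r(m-c_m)-n$, and since $m-c_m=\tfrac m2$ for $m$ even and $\tfrac{m-3}{2}$ for $m$ odd, this difference is positive for all even $m\ge 4$ and all odd $m\ge 7$ (with $r$ as you chose it). Hence the alternating labelling of $P_n$ is suboptimal throughout that range, and the ``rigorous minimisation argument'' you rightly ask for cannot rescue the stated values---the theorem itself, not just the proof, fails there. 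The paper's argument, being purely a construction, does not detect this; your analysis does.
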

\begin{proof}
We consider the following cases.

\noindent {\em Case-1:} Let $n$ is odd. then, $\frac{n+1}{2}$ vertices can be labeled by non-singleton sets and $\frac{n+1}{2}$ vertices can be labeled by singleton sets. Therefore, by Theorem \ref{T-NMIEG}, $\frac{n+1}{{2}}$ copies of $C_m$ must be $1$-uniform. For the remaining $\frac{n+1}{{2}}$ copies of $C_m$, the vertices can be labeled alternately by singleton sets and non-singleton sets respectively. In this context, we have the following subcases.

\noindent {\em Subcase-1.1:} Let $m$ be an odd integer. Then, each copy $C_m$, corresponding to the mono-indexed vertices of $P_n$, can have at least one mono-indexed edge. In this case, it has $\frac{m+1}{2}$ mono-indexed vertices and $\frac{m-1}{2}$ vertices that are not mono-indexed. That is, there exist $\frac{m+1}{2}$ edges between $P_n$ and each of the relevant copies of $C_m$. Hence, the total number of mono-indexed edges in $P_n\odot C_m$ is $\frac{n+1}{2}m+\frac{n+1}{2}.\frac{m+1}{2}+\frac{n+1}{2}= \frac{3}{4}(m+1)(n+1)$.

\noindent {\em Subcase-1.2:} Let $m$ be an even integer. Then, $\frac{m}{2}$ vertices can be labeled by singleton set-labels and the remaining $\frac{m}{2}$ vertices have non-singleton set-labels. Then, there exist $\frac{m}{2}$ edges between $P_n$ and each of the relevant copies of $C_m$. Hence, the total number of mono-indexed edges in $P_n\odot C_m$ is $\frac{n+1}{2}m+\frac{n+1}{2}\frac{m}{2}= \frac{3}{4}m(+1)n$.

\noindent {\em Case-2:} Let $n$ is even. Then, $\frac{n+2}{2}$ vertices of $P_n$ have singleton set-label and $\frac{n}{2}$ vertices of $P_n$ have non-singleton set-label. Therefore, by Theorem \ref{T-NMIEG}, $\frac{n+2}{2}$ copies of $C_m$ are $1$-uniform in $C_m\odot P_n$ and the remaining $\frac{n}{2}$ copies of $C_m$ can be labeled alternately by distinct singleton and non-singleton sets. For these $\frac{m}{2}$ copies, we have the following subcases.

\noindent {\em Subcase-2.1:} Let $m$ be an odd integer. Then, for each copy of $C_m$ corresponding to a mono-indexed vertex of $P_n$, there are $\frac{m+1}{2}$ vertices that can be mono-indexed and the remaining $\frac{m-1}{2}$ vertices can be labeled by non-singleton sets. Then, there exist $\frac{m+1}{2}$ edges between $P_n$ and each of the relevant copies of $C_m$. Hence, the total number of mono-indexed edges in $P_n\odot C_m$ is $\frac{n}{2}m+\frac{n+2}{2}.\frac{m+1}{2}+\frac{n+2}{2}=\frac{1}{4}[3n(m+1)+2(m+3)]$.

\noindent {\em Subcase-2.2:} Let $m$ be an even integer. Then, $\frac{m}{2}$ vertices have singleton set-labels and the remaining $\frac{m}{2}$ vertices can be labeled by non-singleton set-labels. Then, there exist $\frac{m}{2}$ edges between $P_n$ and each of the relevant copies of $C_m$. Therefore, the total number of mono-indexed edges in $P_n \odot C_m$ is $\frac{n}{2}m+\frac{n+2}{2}.\frac{m}{2}=\frac{1}{4}m(3n+2)$.
\end{proof}

We now discuss on the the sparing number of the corona of two cycles.  Let $C_m$ and $C_n$ be two cycles that admit weak IASI. Then, by Theorem \ref{T-NMIEG}, the corona $C_m\odot C_n$ admits a weak IASI if and only if $C_m$ is $1$-uniform or $r$ copies of $C_m$ are $1$-uniform, where $r$ is the number of vertices in $C_m$ that are not mono-indexed. Hence, we have

\begin{theorem}\label{T-WIASI-GP-C5}
The sparing number of $C_m\odot C_n$ is
\begin{equation*}
\varphi(C_m\odot C_n)=
\begin{cases}
\frac{3}{4}mn & ~~\text{if ~$m$~ and ~$n$~ are even}\\ 
\frac{3}{4}m(n+1) & ~~\text{if ~$m$~ is even and ~$n$~ is odd}\\
1+\frac{1}{4}(3m-1)n & ~~\text{if ~$m$~ is odd and ~$n$~ is even}\\
2+\frac{1}{4}(3m-1)(n+1) & ~~\text{if ~$m$~ and ~$n$~ are odd}.
\end{cases}
\end{equation*}
\end{theorem}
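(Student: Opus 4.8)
The plan is to mirror the template of the preceding corona theorems: first construct an explicit weak IASI realising the claimed count (the upper bound), then argue that no weak IASI can do better (the lower bound). The single fact I would lean on throughout is that, for a weak IASI, an edge $uv$ is mono-indexed precisely when both $f(u)$ and $f(v)$ are singletons, since $|f^+(uv)|=\max(|f(u)|,|f(v)|)$ equals $1$ iff both cardinalities are $1$. Hence, writing $A$ for the set of mono-indexed vertices, the mono-indexed edges are exactly the edges of $C_m\odot C_n$ with both ends in $A$; and because every edge must carry a mono-indexed end (\cite{GS1}), the complement of $A$ is an independent set of $C_m\odot C_n$. In particular the non-mono-indexed vertices of the base $C_m$ form an independent set there, so their number $r$ satisfies $r\le\alpha(C_m)$, which is $m/2$ for even $m$ and $(m-1)/2$ for odd $m$.

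Next I would tabulate the cost of each structural piece. Label the base vertices $v_1,\dots,v_m$. By Theorem \ref{T-NMIEG}, each of the $r$ copies of $C_n$ sitting on a non-mono-indexed $v_i$ must be $1$-uniform, contributing all $n$ of its cycle edges as mono-indexed while its $n$ joining edges to $v_i$ stay non-mono-indexed. A copy on a mono-indexed $v_i$ is itself a weak IASI $C_n$, so its singletons form a vertex cover of that cycle and number at least $\lceil n/2\rceil$; the mono-indexed joining edges to $v_i$ equal the number of singletons in the copy, while the internal mono-indexed edges number $0$ when $n$ is even but at least $1$ when $n$ is odd (Theorem \ref{T-WUOC}). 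A short check then gives the cheapest such copy the cost $c_n=n/2$ for even $n$ and $c_n=\frac{n+1}{2}+1$ for odd $n$. Finally, when the non-mono-indexed vertices form an independent set of size $r$, the edges internal to $C_m$ contribute $m-2r$ mono-indexed edges, which is $0$ in the even case (alternating labelling, $r=m/2$) and at least $1$ in the odd case (Theorem \ref{T-WUOC}).

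With these per-piece costs in hand I would run the four parity cases exactly as in the earlier proofs. In each case fix the alternating labelling of $C_m$ (so $r=\alpha(C_m)$), make the $r$ copies on non-mono vertices $1$-uniform, and give each of the remaining $m-r$ copies a cheapest weak IASI on $C_n$. Summing the three contributions $(m-2r)+rn+(m-r)c_n$ reproduces the four branches; for instance, when $m,n$ are both odd one gets $1+\frac{m-1}{2}n+\frac{m+1}{2}\big(\frac{n+1}{2}+1\big)=2+\frac14(3m-1)(n+1)$, and the even or mixed branches follow by the same bookkeeping. This settles the upper bound.

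The step I expect to be the genuine obstacle is the matching lower bound, i.e. that the alternating labelling is optimal. Over all admissible labellings the quantity to minimise has the form $(m-2r)+rn+(m-r)c_n=m(1+c_n)+r\,(n-c_n-2)$, a linear function of the independent-set size $r\in[0,\alpha(C_m)]$, so the optimal $r$ hinges entirely on the sign of the coefficient $n-c_n-2$. One must therefore verify, case by case, both that no copy can undercut the cost $c_n$ (the vertex-cover bound of $\lceil n/2\rceil$ singletons together with the forced internal edge of an odd $C_n$) and that this coefficient is nonpositive, so that pushing $r$ up to $\alpha(C_m)$ — rather than down toward an all-singleton $C_m$ that invokes the ``$C_m$ is $1$-uniform'' alternative of Theorem \ref{T-NMIEG} — indeed minimises the total. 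Pinning down that sign, and confirming the per-copy optimality feeding into it, is where I would concentrate the real effort.
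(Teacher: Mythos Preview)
Your upper-bound construction is exactly what the paper does: it fixes the alternating labelling of $C_m$, makes the copies over non-mono base vertices $1$-uniform, alternates the remaining copies, and adds up the pieces in four parity cases. The paper offers no lower-bound argument whatsoever; it tacitly assumes that the alternating base labelling is optimal and never compares it against the ``$C_m$ is $1$-uniform'' alternative of Theorem~\ref{T-NMIEG}.

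Your lower-bound framework goes further and is set up correctly: with $r$ non-mono base vertices the total mono-indexed edge count is $m(1+c_n)+r(n-c_n-2)$, where the per-copy optimum $c_n=3\lceil n/2\rceil-n$ (i.e.\ $n/2$ for $n$ even, $(n+3)/2$ for $n$ odd) follows from the vertex-cover bound exactly as you indicate. The difficulty is that the step you flag as ``the real effort'' cannot be completed. Computing the coefficient gives
\[
n-c_n-2=\begin{cases}\dfrac{n}{2}-2 & n\ \text{even},\\[6pt] \dfrac{n-7}{2} & n\ \text{odd},\end{cases}
\]
which is strictly positive for every even $n\ge 6$ and every odd $n\ge 9$. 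In those ranges the minimum over $r$ is attained at $r=0$, i.e.\ by making \emph{all} of $C_m$ mono-indexed, and the resulting count $m(1+c_n)$ is strictly smaller than the stated formula. Concretely, in $C_4\odot C_6$ an all-singleton base with each copy alternately labelled yields $4+4\cdot 3=16$ mono-indexed edges, while the theorem claims $\tfrac{3}{4}\cdot 4\cdot 6=18$.

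So the gap is not in your method but in the statement: the sign check you correctly isolate fails, and the theorem as written is false except for $n\in\{3,4,5,7\}$. Your linear optimisation in $r$ is precisely the right tool; carrying it through gives the genuine sparing number, which acquires an additional regime $m(1+c_n)$ once $n-c_n-2>0$.
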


\begin{proof}
Let $C_m$ and $C_n$ be two cycles which admit weak IASIs. Let $G=C_m\odot C_n$. We consider the problem in two cases. 

\noindent {\em Case-1:} Let $m$ be even. Then, $C_m$ has $\frac{m}{2}$ mono-indexed vertices and has $\frac{m}{2}$ vertices that are not mono-indexed. Then, by Theorem \ref{T-NMIEG}, $\frac{m}{2}$ copies of $C_n$ are $1$-uniform. Let $C_{n_i}$ be the copy of $C_n$ corresponding to the $i$-th vertex of $C_m$. Let $\mathbb{C}$ be the set of all copies $C_{n_i}$ in $G$ that are $1$-uniform and $\mathbb{C}'$ be the set of all copies $C_{n_i}$ in $G$ that are not $1$-uniform. The number of mono-indexed edges in in $\mathbb{C}$ is $\frac{mn}{2}$. To find the number of other mono-indexed edges in $G$, we consider the following subcases.

\noindent {\em Subcase-1.1:} Let $n$ is even. Then, each $C_{n_i}$ in  $\mathbb{C}'$ has (at least) $\frac{n}{2}$ mono-indexed vertices and need not have any mono-indexed edge. Since the corresponding vertex $v_i$ in $C_1$ is mono-indexed, the number of mono-indexed edges in the component $C_{n_i}+\{v_i\}$ is $\frac{n}{2}$. Therefore, the number of mono-indexed edges connecting the cycle $C_m$ and the elements of $\mathbb{C}'$ is $\frac{m}{2}.\frac{n}{2}=\frac{mn}{4}$. Hence, the total number of mono-indexed edges in $G$ is $\frac{mn}{2}+\frac{mn}{4}=\frac{3}{4}mn$.

\noindent {\em Subcase-1.2:} Let $n$ is odd. Then, each $C_{n_i}$ in $\mathbb{C}'$ has (at least) $\frac{n+1}{2}$ mono-indexed vertices and has (at least) one mono-indexed edge. Therefore, the total number of mono-indexed edges in $\mathbb{C}'$ is $\frac{m}{2}$. Now, the number of mono-indexed edges in $C_{n_i}+\{v_i\}$ is at least $\frac{(n+1)}{2}$.  Therefore, the number of mono-indexed edge connecting the cycle $C_m$ and the elements of $\mathbb{C}'$ is $\frac{m}{2}.\frac{(n+1)}{2}$. Hence, the total number of mono-indexed edges in $G$ is $\frac{mn}{2}+\frac{m}{2}.\frac{(n+1)}{2}+\frac{m}{2}=\frac{3}{4}m(n+1)$.

\noindent {\em Case-2:} Let $m$ be odd. Then, $C_m$ has $\frac{(m+1)}{2}$ mono-indexed vertices and has $\frac{(m-1)}{2}$ vertices that are not mono-indexed. Also, $C_m$ must have (at least) one mono-indexed edge. Then, by Theorem \ref{T-NMIEG}, $\frac{(m-1)}{2}$ copies of $C_n$ are $1$-uniform. The number of mono-indexed edges in $\mathbb{C}$ is (at least) $\frac{(m-1)n}{2}$. To find the number of other mono-indexed edges in $G$, we consider the following subcases.

\noindent {\em Subcase-1.1:} Let $n$ is even. Then, each $C_{n_i}$ in  $\mathbb{C}'$ has (at least) $\frac{n}{2}$ mono-indexed vertices and need not have any mono-indexed edge. Since the corresponding vertex $v_i$ in $C_m$ is mono-indexed, the number of mono-indexed edges in the component $C_{n_i}+\{v_i\}$ is at least $\frac{n}{2}$. Therefore, the number of mono-indexed edge connecting the cycle $C_1$ and the elements of $\mathbb{C}'$ is at least $\frac{(m+1)}{2}.\frac{n}{2}=\frac{(m+1)n}{4}$. Hence, the total number of mono-indexed edges in $G$ is $1+\frac{(m-1)n}{2}+\frac{(m+1)n}{4}=1+\frac{1}{4}(3m-1)n$.

\noindent {\em Subcase-1.2:} Let $n$ is odd. Then, each $C_{n_i}$ in $\mathbb{C}'$ has (at least) $\frac{n+1}{2}$ mono-indexed vertices and has (at least) one mono-indexed edge. Therefore, the total number of mono-indexed edges in $\mathbb{C}'$ is at least  $\frac{(m+1)}{2}$. Now, the number of mono-indexed edges in $C_{n_i}+\{v_i\}$ is at least  $\frac{(n+1)}{2}$.  Therefore, the number of mono-indexed edge connecting the cycle $C_m$ and the elements of at least $\mathbb{C}'$ is $\frac{(m+1)}{2}.\frac{(n+1)}{2}$. Hence, the total number of mono-indexed edges in $G$ is at least  $1+\frac{(m-1)n}{2}+\frac{(m+1)(n+1)}{4}+\frac{(m+1)}{2}=2+\frac{1}{4}(3m-1)(n+1)$.
\end{proof}

We now proceed to discuss about the sparing numbers of the corona of two graphs, at least one of them being a complete graph. In the following theorem, we estimate the sparing number of the corona of two complete graphs.

\begin{theorem}
The sparing number of the corona $K_m \odot K_n$of two complete graphs $K_m$ and $K_n$ is $\frac{1}{2} [(m-1)(m-2)+mn(n-1)]$.
\end{theorem}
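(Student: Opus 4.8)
The plan is to prove the formula by establishing matching lower and upper bounds, taking advantage of the fact that $K_m\odot K_n$ is assembled entirely from complete graphs whose sparing numbers are already given by Theorem \ref{T-WKN}. The structural observation I would build everything on is the following. Write $v_1,\dots,v_m$ for the vertices of the base $K_m$ and let $K_n^{(i)}$ be the $i$-th copy of $K_n$. Since $v_i$ is joined to every vertex of $K_n^{(i)}$ and $K_n^{(i)}$ is complete, the subgraph induced by $\{v_i\}\cup V(K_n^{(i)})$ is a copy of $K_{n+1}$. Consequently the edge set of $K_m\odot K_n$ partitions into the $\binom{m}{2}$ edges of the base $K_m$ together with the $m$ copies of $K_{n+1}$ just described, and these $m+1$ pieces are pairwise edge-disjoint.

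For the lower bound I would invoke Theorem \ref{T-WSG}: the restriction of any weak IASI of $K_m\odot K_n$ to each of these subgraphs is again a weak IASI. Hence the number of mono-indexed edges of $K_m\odot K_n$ lying inside the base $K_m$ is at least $\varphi(K_m)=\tfrac12(m-1)(m-2)$, and the number lying inside each $K_{n+1}$ block is at least $\varphi(K_{n+1})=\tfrac12 n(n-1)$. Because the pieces are edge-disjoint these estimates simply add, giving $\varphi(K_m\odot K_n)\ge \tfrac12(m-1)(m-2)+m\cdot\tfrac12 n(n-1)$, which is exactly the claimed value.

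For the upper bound I would exhibit a labelling attaining this count. As the hubs $v_1,\dots,v_m$ form a clique, at most one may be non-mono-indexed; I would make exactly $v_1$ non-mono-indexed and all the other hubs mono-indexed, so the base contributes precisely $\varphi(K_m)=\binom{m-1}{2}$ mono-indexed edges. The copy $K_n^{(1)}$ attached to $v_1$ is then made $1$-uniform, which is exactly the configuration demanded by Theorem \ref{T-NMIEG} with $r=1$, guaranteeing that a weak IASI of the whole corona exists. In every other copy $K_n^{(i)}$, $i\ge 2$, I would leave exactly one vertex non-mono-indexed. A short check shows each of the $m$ blocks $K_{n+1}$ then has exactly $n$ mono-indexed vertices forming a clique, hence exactly $\binom n2=\tfrac12 n(n-1)$ mono-indexed edges, while no hub-to-copy edge becomes mono-indexed. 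Assigning distinct singleton sets to the mono-indexed vertices and suitable distinct non-singleton sets to the rest (always possible over $\mathbb{N}_0$) keeps both $f$ and $f^+$ injective, and since every edge retains a mono-indexed end the result is a genuine weak IASI with $\tfrac12[(m-1)(m-2)+mn(n-1)]$ mono-indexed edges, matching the lower bound.

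The step I expect to demand the most care is checking that the lower bound is genuinely attained, that is, that the base constraint (at most one non-mono-indexed hub) and the per-block constraints (at most one non-mono-indexed vertex each) can be satisfied simultaneously without forcing any block past $\binom n2$ mono-indexed edges. The key point, which I would highlight, is that the single non-mono-indexed hub $v_1$ serves as the \emph{free} non-mono-indexed vertex of its own block $K_{n+1}^{(1)}$; that block therefore still attains $\binom n2$ even though its copy $K_n^{(1)}$ is forced to be $1$-uniform, so the interaction between the base and the blocks never inflates the total above the additive lower bound.
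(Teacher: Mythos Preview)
Your proof is correct and in fact more complete than the paper's. The paper establishes only the upper bound: it exhibits a weak IASI (the same one you describe, with $v_1$ non-mono-indexed, $K_n^{(1)}$ made $1$-uniform, and one non-mono-indexed vertex in each remaining copy), counts the non-mono-indexed edges as $mn+m-1$, and subtracts from the total edge count $\tfrac12 m[n^2+n-1+m]$ to obtain the formula. No lower-bound argument appears in the paper; optimality is left implicit.

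Your edge-disjoint decomposition of $K_m\odot K_n$ into the base $K_m$ and $m$ copies of $K_{n+1}$ is a genuine addition: it turns the lower bound into a one-line consequence of Theorem~\ref{T-WSG} and Theorem~\ref{T-WKN}, and it explains the shape of the formula as $\varphi(K_m)+m\,\varphi(K_{n+1})$. The construction you give for the upper bound coincides with the paper's, but your per-block accounting (each $K_{n+1}$ block ends up with exactly $n$ mono-indexed vertices and hence exactly $\binom{n}{2}$ mono-indexed edges) is a tidier way to see that equality is attained than the paper's total-minus-saved count.

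One small slip to fix: the remark ``while no hub-to-copy edge becomes mono-indexed'' is false for the blocks with $i\ge 2$, where the mono-indexed hub $v_i$ is joined to $n-1$ mono-indexed vertices of $K_n^{(i)}$. This does not affect your total, since those $n-1$ hub-to-copy edges are already among the $\binom{n}{2}$ mono-indexed edges you tallied for that block, but the sentence should be deleted or corrected.
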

\begin{proof}
The number of edges in $K_m\odot K_n$ is $|E|= \frac{1}{2}m(m-1)+mn+\frac{1}{2}mn(n-1)= \frac{1}{2}m[n^2+n-1+m]$.

By Theorem \ref{T-WKN}, the graph $K_m$ has at most one vertex that is not mono-indexed. Hence, in $K_m\odot K_n$, only one copy of $K_n$, say $K_{n1}$, needs to be $1$-uniform.  Therefore, all the $n$ edges connecting $K_m$ and the copy $K_{n1}$ are not mono-indexed. All other $(m-1)$ copies of $K_n$ can have one vertex having non-singleton set-label. That is, all copies of $K_n$, except $K_{n1}$, have $(n-1)$ edges that are not mono-indexed. Therefore, the total number of edges in $K_m \odot K_n$ that are not mono-indexed , is $(m-1)+n+(m-1)n=mn+m-1$.

Hence, the number of mono-indexed edges in $K_m \odot K_n$ is $\varphi(K_m\odot K_n)=\frac{1}{2}m[n^2+n-1+m]-(mn+m-1)=\frac{1}{2} [(m-1)(m-2)+mn(n-1)]$.
\end{proof}

Next, we consider the corona of a path and a complete graph. We determine the sparing number of $P_m\odot K_n$ in the following theorem.

\begin{theorem}
The sparing number of $P_m\odot K_n$ is $\varphi(P_m\odot K_n)=\frac{1}{2}n(n-1)(m+1)$.
\end{theorem}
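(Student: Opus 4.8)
The plan is to establish the equality by proving matching upper and lower bounds, and the key structural observation that makes both bounds clean is this: for each vertex $v_i$ of $P_m$, the vertex $v_i$ together with its attached copy $K_n^{(i)}$ of $K_n$ induces a complete graph $K_{n+1}$, since $v_i$ is joined to every vertex of $K_n^{(i)}$ and $K_n^{(i)}$ is already complete. Since $P_m$ has $m+1$ vertices, the edge set of $P_m\odot K_n$ decomposes as the $m$ edges of the path together with $m+1$ pairwise edge-disjoint copies of $K_{n+1}$, one per vertex of $P_m$; this is consistent with the corona edge count $m+(m+1)\binom{n+1}{2}$.

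For the upper bound I would exhibit a weak IASI of $P_m\odot K_n$ with exactly $\frac{1}{2}n(n-1)(m+1)$ mono-indexed edges. Since $P_m$ is bipartite, label its vertices alternately by singleton and non-singleton sets, so that no edge of $P_m$ is mono-indexed (Theorem \ref{T-SNBP}). Then for each vertex $v_i$ I would label the attached copy $K_n^{(i)}$ by one of two schemes, and the point is that both produce exactly $\frac{1}{2}n(n-1)$ mono-indexed edges inside the induced $K_{n+1}$. If $v_i$ is mono-indexed, label $K_n^{(i)}$ with $n-1$ singleton and one non-singleton vertex; then the $\binom{n-1}{2}$ internal edges among singletons together with the $n-1$ connecting edges from $v_i$ to those singletons are mono-indexed, giving $\frac{(n-1)(n-2)}{2}+(n-1)=\frac{n(n-1)}{2}$. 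If $v_i$ is non-singleton, then Theorem \ref{T-NMIEG} forces $K_n^{(i)}$ to be $1$-uniform, so all $\binom{n}{2}=\frac{n(n-1)}{2}$ internal edges are mono-indexed while no connecting edge is. Because the parity of $m$ only changes how many vertices fall into each scheme and not the per-vertex contribution, summing over all $m+1$ vertices gives $\frac{1}{2}n(n-1)(m+1)$, independent of the parities of $m$ and $n$.

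For the lower bound I would show that no weak IASI can do better. Given any weak IASI $f$ of $P_m\odot K_n$, its restriction to each induced $K_{n+1}$ on $\{v_i\}\cup V(K_n^{(i)})$ is again a weak IASI by Theorem \ref{T-WSG}, so by Theorem \ref{T-WKN} it must carry at least $\varphi(K_{n+1})=\frac{1}{2}n(n-1)$ mono-indexed edges. Since these $m+1$ copies of $K_{n+1}$ are pairwise edge-disjoint (and disjoint from the path edges), the counts add without overlap, yielding at least $(m+1)\cdot\frac{1}{2}n(n-1)$ mono-indexed edges. Together with the construction this forces equality.

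The main obstacle, and the crux of the argument, is the uniform per-vertex count: one must verify that the two admissible labeling regimes for an attached copy --- the one forced when its path vertex is non-singleton, and the one chosen when it is singleton --- both contribute exactly $\frac{1}{2}n(n-1)$ mono-indexed edges, which is precisely $\varphi(K_{n+1})$. This coincidence is what collapses the four parity subcases seen in the earlier path and cycle theorems into a single parity-free formula. A minor technical point I would also check is that sufficiently generic choices of the underlying sets make $f$ and $f^+$ genuinely injective; this is routine and is guaranteed by the admissibility established in Theorem \ref{T-NMIEG}.
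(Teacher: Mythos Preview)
Your argument is correct and is organized around a structural insight the paper does not exploit: you observe that each vertex $v_i$ of $P_m$ together with its attached copy $K_n^{(i)}$ induces a $K_{n+1}$, so that $P_m\odot K_n$ decomposes edge-disjointly into the $m$ path edges and $m+1$ copies of $K_{n+1}$. This buys you two things the paper's proof does not have. First, it collapses the parity case analysis: the paper splits on whether $m$ is odd or even, computes the two tallies $\frac{m}{2}\cdot\frac{n(n-1)}{2}+\frac{m+2}{2}\cdot\frac{(n-1)(n-2)}{2}+\frac{m+2}{2}(n-1)$ and $\frac{m+1}{2}\cdot\frac{n(n-1)}{2}+\frac{m+1}{2}\cdot\frac{(n-1)(n-2)}{2}+\frac{m+1}{2}(n-1)$ separately, and observes that both simplify to $\frac{1}{2}n(n-1)(m+1)$; you instead show that each $K_{n+1}$ contributes exactly $\varphi(K_{n+1})=\frac{n(n-1)}{2}$ regardless of whether its path vertex is mono-indexed, which explains the parity-free answer. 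Second, and more substantively, your edge-disjoint decomposition together with Theorem~\ref{T-WSG} and Theorem~\ref{T-WKN} yields a genuine lower bound of $(m+1)\varphi(K_{n+1})$, whereas the paper's proof is really only an explicit construction achieving the stated value and does not separately argue optimality. The paper's approach has the modest advantage of being a direct count that parallels the other corona computations in the section; yours is shorter, parity-free, and actually two-sided.
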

\begin{proof}
Consider the following cases.

\noindent {\em Case-1:} If $m$ is even, Then $P_m$ has $\frac{m}{2}$ vertices that are not mono-indexed and $\frac{m+2}{2}$ mono-indexed vertices. Therefore, $\frac{m}{2}$ copies of $K_n$ must be $1$-uniform. For the remaining copies of $K_n$, one vertex can have a mono-indexed set label and hence, there are $(n-1)$ mono-indexed edges between $P_m$ and these copies of $K_n$. Therefore, the total number of mono-indexed edges in $P_m\odot K_n$ is $\frac{m}{2}\frac{n(n-1)}{2}+\frac{m+2}{2}\frac{(n-1)(n-2)}{2}+ \frac{m+2}{2}(n-1)=\frac{1}{2}n(n-1)(m+1)$.

\noindent {\em Case-2:} If $m$ is odd, Then, there are $P_m$ $\frac{m+1}{2}$ mono-indexed vertices and $P_m$ $\frac{m+1}{2}$ vertices that are not mono-indexed. Therefore, $\frac{m+1}{2}$ copies of $K_n$ must be $1$-uniform. For the remaining copies of $K_n$, there are $(n-1)$ mono-indexed edges between $P_m$ and these copies of $K_n$. Therefore, the total number of mono-indexed edges in $P_m\odot K_n$ is $\frac{m+1}{2}\frac{n(n-1)}{2}+\frac{m+1}{2}\frac{(n-1)(n-2)}{2}+ \frac{m+1}{2}(n-1)=\frac{1}{2}n(n-1)(m+1)$. 

This completes the proof.
\end{proof}

The following Theorem estimates the sparing number of $K_n\odot P_m$.

\begin{theorem}
The sparing number of $K_n \odot P_m$ is 
\begin{equation*}
\varphi(K_n\odot P_m)=
\begin{cases}
\frac{1}{2}[(n-1)^2+m(n+1)] & ~~ \text{if $m$ is odd}\\
\frac{1}{2}[(n-1)(n-2)+m(n+1) & ~~ \text{if $m$ is even}.
\end{cases}
\end{equation*}
\end{theorem}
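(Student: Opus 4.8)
The plan is to follow the template of the preceding corona theorems: fix a weak IASI on the base graph $K_n$ that is as economical as possible, use Theorem~\ref{T-NMIEG} to decide which copies of $P_m$ must be $1$-uniform, label the remaining copies alternately, and then tally the mono-indexed edges, splitting on the parity of $m$.

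First I would record the structure of a weak IASI on $K_n$. By Theorem~\ref{T-WKN} at most one vertex of $K_n$ can fail to be mono-indexed, since two non-mono-indexed vertices would leave the edge joining them without a mono-indexed end; taking exactly one such vertex $v_1$ attains the sparing number $\tfrac12(n-1)(n-2)$, and the mono-indexed edges of $K_n$ are then the $\binom{n-1}{2}$ edges among the other vertices. With $r=1$ non-mono-indexed vertex, Theorem~\ref{T-NMIEG} forces the single copy $P_m^{(1)}$ attached to $v_1$ to be $1$-uniform, while the remaining $n-1$ copies may be labelled alternately by distinct singleton and non-singleton sets.

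I would then count by type. The $1$-uniform copy $P_m^{(1)}$ has all $m$ of its path edges mono-indexed, but every join edge $v_1u$ is non-mono-indexed because $v_1$ is not, so it contributes $m$. For a copy $P_m^{(i)}$ with $i\ge 2$ the apex $v_i$ is mono-indexed, so the join edge $v_iu$ is mono-indexed exactly when $u$ is; choosing the mono-indexed path vertices to be one side of the bipartition makes them an independent minimum vertex cover, giving $0$ mono-indexed path edges and $\lceil m/2\rceil$ mono-indexed join edges. Thus the $n-1$ alternately labelled copies contribute $(n-1)\lceil m/2\rceil$. Summing the three parts and putting $\lceil m/2\rceil=\tfrac{m+1}{2}$ for odd $m$ and $\tfrac{m}{2}$ for even $m$, a routine simplification collapses to $\tfrac12[(n-1)^2+m(n+1)]$ and $\tfrac12[(n-1)(n-2)+m(n+1)]$, the two stated values.

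The step I expect to be the real obstacle is the matching lower bound, i.e.\ showing this configuration is optimal. The construction above spends one non-mono-indexed vertex on $K_n$; the natural competitor makes $K_n$ fully $1$-uniform, paying the extra $n-1$ edges of $K_n$ incident to $v_1$ but downgrading the contribution of the copy over $v_1$ from $m$ (as a $1$-uniform path) to only $\lceil m/2\rceil$ (as an alternately labelled fan). The first configuration wins precisely when $\lfloor m/2\rfloor<n-1$, so a rigorous proof cannot merely exhibit the construction above: it must compare the two candidate states of $K_n$ and argue, by a per-vertex count of owned mono-indexed edges, that no weak IASI consistent with Theorem~\ref{T-NMIEG} does better. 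Pinning down this comparison---and in particular checking that the stated formula, rather than the all-$1$-uniform alternative, is the true minimum in the intended range of $m$ and $n$---is where the substance of the argument lies.
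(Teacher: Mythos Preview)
Your construction and edge-count mirror the paper's proof exactly: one non-mono-indexed vertex in $K_n$, the single copy of $P_m$ above it taken $1$-uniform, the remaining $n-1$ copies labelled alternately, and then the same three-term sum $\tfrac{(n-1)(n-2)}{2}+m+(n-1)\lceil m/2\rceil$ split by the parity of $m$.

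The paper, however, does not address the lower bound at all; it simply exhibits this labelling and reports the count as the sparing number, just as in its other corona theorems. So the obstacle you anticipate---comparing against the all-$1$-uniform labelling of $K_n$---is not something the paper resolves. Your calculation that the alternative beats the stated construction once $\lfloor m/2\rfloor > n-1$ is correct, which means you have in fact spotted a genuine gap (indeed, a parameter range where the stated formula is not the minimum); but to reproduce the paper's argument you need only the construction you already have.
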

\begin{proof}
By Theorem \ref{T-WKN}, the complete graph $K_n$ has $\frac{1}{2}(n-1)(n-2)$ mono-indexed edges. Since $K_n$ has only one vertex that is not mono-indexed, only one copy of $P_m$ needs to be $1$-uniform and the remaining $(n-1)$ copies of $P_m$ can be labeled alternately by singleton and non-singleton sets. For these copies of $P_m$ we have the following cases.

\noindent {\em Case-1:} If $m$ is odd, then each copy of $P_m$ has $\frac{m+1}{2}$ mono-indexed vertices and hence the total number of mono-indexed edges is $\frac{(n-1)(n-2)}{2}+\frac{(n-1)(m+1)}{2}+m = \frac{1}{2}[(n-1)^2+m(n+1)]$.

\noindent {\em Case-2:} If $m$ is even, then each copy of $P_m$ has $\frac{m}{2}$ mono-indexed vertices and $\frac{m+2}{2}$ vertices that are not mono-indexed.  Therefore, the total number of mono-indexed edges $k_n\odot P_m$ is $\frac{(n-1)(n-2)}{2}+\frac{n-1}{2}m+m = \frac{1}{2}[(n-1)(n-2)+m(n+1)]$.
\end{proof}

The following theorems estimate the sparing number of the coronas of two graphs in which one is a cycle and the other is a complete graph.

\begin{theorem}
The sparing number of $C_m\odot K_n$ is $\varphi(P_m\odot K_n)=\frac{1}{2}mn(n-1)$.
\end{theorem}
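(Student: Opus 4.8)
The plan is to follow the template of the preceding corona theorems: I would first count the total number of edges of $C_m\odot K_n$, then determine the largest number of edges an optimal weak IASI can leave non-mono-indexed, the difference being $\varphi(C_m\odot K_n)$. By Definition~\ref{D-5.1}, the graph has $m+mn+\frac12 mn(n-1)$ edges, arising respectively from the cycle $C_m$, the $mn$ edges joining each vertex of $C_m$ to the vertices of its copy, and the $m$ copies of $K_n$. By Theorem~\ref{T-WKN} a single copy of $K_n$ can carry at most one non-mono-indexed vertex, and by Theorem~\ref{T-NMIEG} every copy attached to a non-mono-indexed vertex of $C_m$ is forced to be $1$-uniform.

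The heart of the argument is a per-copy invariance observation that explains why the answer is independent of the chosen labelling of $C_m$. Fix a vertex $v_i$ of $C_m$ together with its copy $K_{n_i}$, and count the mono-indexed edges among the edges of $K_{n_i}$ and the $n$ joining edges $v_iu$. If $v_i$ is mono-indexed I take $K_{n_i}$ to have exactly one non-mono-indexed vertex, yielding $\frac12(n-1)(n-2)$ mono-indexed edges inside the copy and $n-1$ mono-indexed joining edges, a total of $\frac12(n-1)(n-2)+(n-1)=\frac12 n(n-1)$. If $v_i$ is non-mono-indexed, then $K_{n_i}$ is forced to be $1$-uniform, contributing all $\frac12 n(n-1)$ of its own edges but none of the joining edges, again $\frac12 n(n-1)$. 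One then checks these are optimal: any other labelling of a copy only raises its count. Hence each of the $m$ vertices of $C_m$ contributes exactly $\frac12 n(n-1)$ mono-indexed edges regardless of its label, and summing gives $\frac12 mn(n-1)$ from all copies and joining edges together.

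It remains to account for the mono-indexed edges of the cycle $C_m$ itself, and this is where I expect the parity of $m$ to matter and to be the main obstacle. When $m$ is even, $C_m$ is bipartite, so by Theorem~\ref{T-SNBP} one can label it with no mono-indexed edge while still leaving $\frac{m}{2}$ vertices non-mono-indexed; the cycle adds nothing and $\varphi(C_m\odot K_n)=\frac12 mn(n-1)$ exactly. When $m$ is odd, however, Theorem~\ref{T-WUOC} forces at least one mono-indexed edge in $C_m$, which is compatible with the copy structure and achievable with precisely one such edge, so the honest minimum appears to be $\frac12 mn(n-1)+1$, matching the additive constant observed for odd outer cycles in Theorem~\ref{T-WIASI-GP-C5}. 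I would therefore carry the proof through the two parity cases for $m$; the delicate point to settle is whether this single unavoidable cycle edge can be absorbed at no cost. If it cannot, the stated formula should be understood as the even-$m$ value, with a ``$+1$'' correction needed in the odd-$m$ case.
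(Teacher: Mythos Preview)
Your argument is essentially the paper's: split on the parity of $m$, make the copies attached to non-mono-indexed vertices of $C_m$ $1$-uniform, give each remaining copy exactly one non-mono-indexed vertex, and add up. The paper computes, for odd $m$,
\[
\tfrac{m-1}{2}\cdot\tfrac{n(n-1)}{2}+\tfrac{m+1}{2}\cdot\tfrac{(n-1)(n-2)}{2}+\tfrac{m+1}{2}(n-1)=\tfrac12 mn(n-1),
\]
and the analogous sum for even $m$, obtaining the same value. Your per-copy invariance observation---that a vertex of $C_m$ together with its copy of $K_n$ and the $n$ joining edges always contributes exactly $\tfrac12 n(n-1)$ mono-indexed edges, whichever label $v_i$ carries---is a tidier way to see why the two parity computations coincide; the paper does not isolate this, it just lets the algebra collapse.

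On the point you flag as the obstacle, you are right and the paper is silent. Its odd-$m$ calculation counts only the edges in the copies and the joining edges and never adds the unavoidable mono-indexed edge of the odd cycle $C_m$ itself, even though the analogous Theorem~\ref{T-WIASI-GP-C5} for $C_m\odot C_n$ explicitly includes a ``$+1$'' in the odd-$m$ cases. So the discrepancy you anticipated is real: the paper's proof, read literally, supports $\tfrac12 mn(n-1)$ only for even $m$, and your proposed correction $\tfrac12 mn(n-1)+1$ for odd $m$ is what a consistent count gives. There is nothing further to absorb; the extra edge cannot be avoided.
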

\begin{proof}
Consider the following cases.

\noindent {\em Case-1:} If $m$ is odd, Then $C_m$ has $\frac{m-1}{2}$ vertices that are not mono-indexed and $\frac{m+1}{2}$ mono-indexed vertices. Therefore, $\frac{m-1}{2}$ copies of $K_n$ must be $1$-uniform. For the remaining copies of $K_n$, one vertex can have a mono-indexed set label and hence, there are $(n-1)$ mono-indexed edges between $C_m$ and these copies of $K_n$. Therefore, the total number of mono-indexed edges in $C_m\odot K_n$ is $\frac{m-1}{2}\frac{n(n-1)}{2}+\frac{m+1}{2}\frac{(n-1)(n-2)}{2}+ \frac{m+1}{2}(n-1)=\frac{1}{2}mn(n-1)$.

\noindent {\em Case-2:} If $m$ is even, Then, there are $\frac{m}{2}$ mono-indexed vertices and $\frac{m}{2}$ vertices that are not mono-indexed. Therefore, $\frac{m}{2}$ copies of $K_n$ must be $1$-uniform. For the remaining copies of $K_n$, there are $(n-1)$ mono-indexed edges between $C_m$ and these copies of $K_n$. Therefore, the total number of mono-indexed edges in $C_m\odot K_n$ is $\frac{m}{2}\frac{n(n-1)}{2}+\frac{m}{2}\frac{(n-1)(n-2)}{2}+ \frac{m}{2}(n-1)=\frac{1}{2}mn(n-1)$. 
\end{proof}

\begin{theorem}
The sparing number of $K_n \odot C_m$ is 
\begin{equation*}
\varphi(K_n\odot C_m)=
\begin{cases}
\frac{1}{2}[(n-1)^2+m(n+1)] & ~~ \text{if $m$ is odd}\\
\frac{1}{2}[(n-1)(n-2)+m(n+1) & ~~ \text{if $m$ is even}.
\end{cases}
\end{equation*}
\end{theorem}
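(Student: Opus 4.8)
The plan is to mirror the argument used above for $K_n\odot P_m$, replacing the path copies by cycle copies and paying close attention to the extra structural constraints that odd cycles impose. By Theorem~\ref{T-WKN}, the graph $K_n$ admits a weak IASI with exactly $\frac{1}{2}(n-1)(n-2)$ mono-indexed edges, realized by a labeling in which a single vertex of $K_n$ carries a non-singleton set-label and the remaining $n-1$ vertices are mono-indexed. Since only one vertex of $K_n$ fails to be mono-indexed, Theorem~\ref{T-NMIEG} forces exactly one copy of $C_m$, say $C_{m,1}$, to be $1$-uniform. This copy contributes all of its $m$ cycle-edges as mono-indexed, while its $m$ edges to the (non-mono-indexed) hub vertex of $K_n$ are not mono-indexed; this is precisely what allows the corona to admit a weak IASI.

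Next I would fix the labels on the remaining $n-1$ copies, each attached to a mono-indexed (singleton) vertex $v_i$ of $K_n$. Because $v_i$ is a singleton, an edge joining $v_i$ to a vertex $u$ of its copy is mono-indexed precisely when $u$ is mono-indexed; hence the number of mono-indexed connecting edges at copy $i$ equals the number of mono-indexed vertices chosen in that copy. To minimize the sparing number I would take, on each such cycle, a labeling attaining the lower bound of Remark~\ref{R-NMEC}, namely $\frac{m+1}{2}$ mono-indexed vertices when $m$ is odd and $\frac{m}{2}$ when $m$ is even. Summing the three contributions — the $\frac{1}{2}(n-1)(n-2)$ edges inside $K_n$, the $m$ edges of the $1$-uniform copy, and the $(n-1)\cdot\frac{m}{2}$ connecting edges — already yields $\frac{1}{2}[(n-1)(n-2)+m(n+1)]$ in the even case, matching the claimed value, because an even cycle need not contain any mono-indexed edge and so closes immediately.

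The step I expect to require the most care is the odd case, and it is genuinely different from the path case. Here each of the $n-1$ non-uniform copies is an \emph{odd} cycle, so by Theorem~\ref{T-WUOC} it must carry at least one mono-indexed cycle-edge, and by Theorem~\ref{T-NME} the number of such edges is odd, hence at least one. Unlike an alternately-labeled path (which has no internal mono-indexed edge), every odd-cycle copy thus forces an extra mono-indexed edge, contributing an additional $(n-1)$ edges overall. I would exhibit, for $m$ odd, a labeling of $C_m$ realizing simultaneously exactly $\frac{m+1}{2}$ mono-indexed vertices and exactly one mono-indexed cycle-edge (for instance two adjacent singletons followed by an alternating pattern), to show the upper bound is tight, and then establish the matching lower bound by combining the per-copy estimates of Remark~\ref{R-NMEC} and Theorem~\ref{T-WUOC} with the $K_n$-bound of Theorem~\ref{T-WKN}. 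Reconciling this forced internal contribution with the stated formula is the crux: the honest count for $m$ odd is $\frac{1}{2}[(n-1)^2+m(n+1)]+(n-1)$, so I would check carefully whether the claimed expression, which coincides with the $K_n\odot P_m$ value, has omitted the $(n-1)$ arising from the odd-cycle edges.
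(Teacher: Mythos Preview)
Your approach is exactly the one the paper takes: label $K_n$ so that a single vertex is non-singleton (giving $\frac{1}{2}(n-1)(n-2)$ mono-indexed edges inside $K_n$), make the copy of $C_m$ attached to that vertex $1$-uniform (contributing $m$ mono-indexed cycle-edges), and on each of the remaining $n-1$ copies use an alternating labeling achieving the minimum number of mono-indexed vertices from Remark~\ref{R-NMEC}, so that the number of mono-indexed connecting edges at such a copy is $\frac{m}{2}$ or $\frac{m+1}{2}$ according as $m$ is even or odd. For $m$ even this reproduces the paper's count verbatim.

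Your caution in the odd case is well placed, and in fact you have caught a genuine omission in the paper. The paper's proof for $m$ odd sums only
\[
\frac{(n-1)(n-2)}{2}+\frac{(n-1)(m+1)}{2}+m,
\]
which accounts for the edges inside $K_n$, the connecting edges at the $n-1$ non-uniform copies, and the edges of the single $1$-uniform copy, but \emph{does not} include any mono-indexed edge internal to the $n-1$ non-uniform odd cycles. By Theorem~\ref{T-WUOC} each such copy must carry at least one mono-indexed edge, so the honest minimum is larger by $n-1$, exactly as you compute. The paper is internally inconsistent here: in the proofs of Theorems~\ref{T-WIASI-GP-C5} and of $\varphi(P_n\odot C_m)$ this extra ``one per non-uniform odd copy'' term \emph{is} included, whereas the present proof and formula coincide verbatim with those for $K_n\odot P_m$, where no such term arises. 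So your diagnosis that the stated value $\frac{1}{2}[(n-1)^2+m(n+1)]$ for $m$ odd has dropped the $(n-1)$ contribution is correct; the paper's own argument does not establish the claimed formula in that case.
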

\begin{proof}
By Theorem \ref{T-WKN}, the complete graph $K_n$ has $\frac{1}{2}(n-1)(n-2)$ mono-indexed edges. Since $K_n$ has only one vertex that is not mono-indexed, only one copy of $C_m$ needs to be $1$-uniform and the remaining $(n-1)$ copies of $C_m$ can be labeled alternately by singleton and non-singleton sets. For these copies of $C_m$ we have the following cases.

\noindent {\em Case-1:} If $m$ is odd, then each copy of $C_m$ has $\frac{m+1}{2}$ mono-indexed vertices and $\frac{m-1}{2}$ vertices that are not mono-indexed. Hence the total number of mono-indexed edges in $K_n\odot C_m$ is $\frac{(n-1)(n-2)}{2}+\frac{(n-1)(m+1)}{2}+m = \frac{1}{2}[(n-1)^2+m(n+1)]$.

\noindent {\em Case-2:} If $m$ is even, then each copy of $P_m$ has $\frac{m}{2}$ mono-indexed vertices and $\frac{m}{2}$ vertices that are not mono-indexed.  Therefore, the total number of mono-indexed edges in $K_n\odot C_m$ is $\frac{(n-1)(n-2)}{2}+\frac{n-1}{2}m+m = \frac{1}{2}[(n-1)(n-2)+m(n+1)]$.
\end{proof}

Next, we study about the sparing number of the corona of two graphs at least one of which is a bipartite graph. In the following theorem, we verify the corona of two complete bipartite graphs. 

\begin{theorem}\label{T-CP-BP}
Let $G_1=K_{m_1,n_1}$ and $G_2=K_{m_2,n_2}$ be two complete bipartite graphs, where $m_i\le n_i$ for $i=1,2$. Then, the sparing number of $G_1\odot G_2$ is $m_2(m_1n_2+n_1)$.
\end{theorem}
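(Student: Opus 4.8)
The plan is to prove the equality in two directions: exhibit one weak IASI of $G_1\odot G_2$ that produces exactly $m_2(m_1n_2+n_1)$ mono-indexed edges (the upper bound), and then show that no weak IASI can produce fewer (the lower bound). Write $A_1,B_1$ for the partite sets of $G_1=K_{m_1,n_1}$ with $|A_1|=m_1\le n_1=|B_1|$, and $A_2,B_2$ for those of $G_2=K_{m_2,n_2}$ with $|A_2|=m_2\le n_2=|B_2|$. Two observations drive everything. First, under a weak IASI an edge $uv$ is mono-indexed exactly when both $f(u)$ and $f(v)$ are singletons, since $|f^+(uv)|=\max(|f(u)|,|f(v)|)$. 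Second, by the characterisation of weak IASI graphs (every edge must carry a mono-indexed end), the non-mono-indexed vertices of any weak IASI form an independent set; in a complete bipartite graph this confines them to a single partite set.

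For the upper bound I label the whole of the smaller part $A_1$ with distinct non-singleton sets and all of $B_1$ with distinct singletons. Then no edge of the base $G_1$ is mono-indexed (each has its $A_1$-end non-singleton), in agreement with $\varphi(G_1)=0$ from Theorem~\ref{T-SNBP}, and exactly the $m_1$ vertices of $A_1$ are non-mono-indexed. By Theorem~\ref{T-NMIEG} the $m_1$ copies of $G_2$ rooted at $A_1$ must be $1$-uniform, so each such copy turns all of its $m_2n_2$ edges into mono-indexed edges and leaves all of its connecting edges non-mono-indexed. For each of the $n_1$ copies rooted at the (mono-indexed) vertices of $B_1$, I label the larger part $B_2$ non-singleton and the smaller part $A_2$ with singletons: such a copy has no internal mono-indexed edge, while exactly the $m_2$ connecting edges to the singleton vertices of $A_2$ are mono-indexed. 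Adding the three contributions gives $0+m_1m_2n_2+n_1m_2=m_2(m_1n_2+n_1)$, hence $\varphi(G_1\odot G_2)\le m_2(m_1n_2+n_1)$.

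For the lower bound I would take an arbitrary weak IASI, let $t$ denote the number of non-mono-indexed vertices of the base $G_1$, and count. By Theorem~\ref{T-NMIEG} the $t$ copies rooted at those vertices are $1$-uniform and already force $t\,m_2n_2$ mono-indexed edges. Each remaining copy is a weak IASI of $K_{m_2,n_2}$ attached to a mono-indexed root; since its non-mono-indexed vertices again lie in one part, a direct minimisation of (internal mono-indexed edges) $+$ (mono-indexed connecting edges) over such configurations shows each of these copies contributes at least $m_2$. Combining this with the number of mono-indexed edges forced inside $G_1$ (which, for a prescribed $t$ and a prescribed choice of which part carries the non-singletons, equals a product of the surviving singleton counts), the total is bounded below by an explicit function of $t$ to be minimised.

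The main obstacle is exactly this minimisation. Declaring a vertex of $G_1$ non-mono-indexed removes its cheap copy but replaces it with a $1$-uniform copy costing $m_2n_2$, whereas keeping it mono-indexed risks creating mono-indexed edges inside $G_1$; the optimum therefore hinges on the comparison between $m_2n_2$ and $n_1+m_2$. I expect the delicate step to be verifying that, under $m_1\le n_1$ and $m_2\le n_2$, this comparison always resolves so that the minimum is attained precisely at the configuration of the upper-bound construction (all of $A_1$ non-singleton, every cheap copy hiding its larger part $B_2$), thereby matching the closed form $m_2(m_1n_2+n_1)$. Turning the single good labeling into a genuine global lower bound, rather than merely an attainable value, is where the real work and the real care lie.
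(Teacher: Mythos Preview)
Your upper-bound construction is exactly the paper's: put non-singleton labels on the smaller part $A_1$ of $G_1$, force the corresponding $m_1$ copies of $G_2$ to be $1$-uniform, and in each of the remaining $n_1$ copies make the larger part $B_2$ non-singleton. The paper's proof stops there: it exhibits this one labeling, counts $m_1m_2n_2+n_1m_2$ mono-indexed edges, and declares that to be the sparing number with no optimality argument whatsoever.

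You go beyond the paper by attempting a lower bound, and you are right to flag the minimisation over $t$ as the crux. Unfortunately the step you hope will ``always resolve'' in favour of the stated formula does not: the statement is in fact false as written. Take $G_1=K_{2,2}$ and $G_2=K_{1,100}$ (so $m_1=n_1=2$, $m_2=1$, $n_2=100$). The formula gives $m_2(m_1n_2+n_1)=202$. But label every vertex of $G_1$ with a singleton (cost: the $4$ edges of $G_1$ become mono-indexed), and in each of the four copies of $G_2$ put the non-singletons on the $100$-side; each copy then contributes $0$ internal and exactly $m_2=1$ connecting mono-indexed edge. Total: $4+4\cdot 1=8\ll 202$. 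More generally, the linear trade-off you identified shows that the paper's configuration ($t=m_1$) beats the all-singleton base ($t=0$) precisely when $m_2(n_2-1)\le n_1$; when this inequality fails, making $G_1$ entirely $1$-uniform is strictly better. So your caution was well placed, and no proof of the stated lower bound can be completed without an additional hypothesis relating the four parameters.
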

\begin{proof}
For $i=1,2$, let $G_i(U_i,V_i,E_i)$ be a bipartite graph with $|U_i|=m_i,~|V_i|=n_i$ and $|E_i=m_in_i$, where $m_i\le n_i$. By Theorem \ref{T-SNBP}, $\varphi(G_i)=0$. 

Label the vertices in $U_1$ of $G_1$ by distinct non-singleton sets and the vertices of $V_1$ of $G_1$ by distinct singleton sets. Label the vertices in $U_2$ of $G_2$ by distinct singleton sets and the vertices of $V_2$ of $G_2$ by distinct non-singleton sets, which are distinct from the set-labels of $G_1$.  

Now, take $|V(G_1)|$ copies of $G_2$ and draw edges between the all vertices of each copy of $G_2$ and the corresponding vertex of $G_1$. Then, all the copies of $G_2$ corresponding to the vertices in $U_1$ of $G_1$ must be $1$-uniform.  That is, $m_1$ copies of $G_2$ are $1$-uniform in $G_1\odot G_2$. Therefore, the total number of mono-indexed edges among these copies of $G_2$ is $m_1m_2n_2$. 

Label the remaining $n_1$ copies of $G_2$ in such a way that the set-labels of the vertices of a copy of $G_2$ are the (suitable) integral multiple of the set-labels of the corresponding set-labels of $G_2$ in such a way that no two vertices in $G_1$ and all copies of $G_2$ have a same set-label. then, the number edges between a vertex of $G_1$ (which are in $V_1$), and the vertices of the corresponding copy of $G_2$ is $m_2$. Therefore, the number of such mono-indexed edges among the vertices in $V_1$ of $G_1$ and the corresponding copies of $G_2$ is $n_1m_2$. 

Therefore, the total number of  mono-indexed edges in $G_1\odot G_2$ is $m_1m_2n_2+n_1m_2 =m_2(m_1n_2+n_1)$.
\end{proof}

Theorem can be generalised for all bipartite graphs as follows and the proof of the theorem is similar to the proof of Theorem \ref{T-CP-BP}. 

\begin{theorem}
Let $G_1(U_1,V_1,E_1)$ and $G_2(U_2,V_2,E_2)$ be two bipartite graphs, where $|U_i|\le |V_i|$ for $i=1,2$. Then, the sparing number of $G_1\odot G_2$ is $|U_1|\,|E_2|+|V_1||U_2| $.
\end{theorem}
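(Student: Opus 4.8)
The plan is to mirror the argument for Theorem \ref{T-CP-BP}, replacing the complete-bipartite quantities $m_2n_2$ and $m_2$ by $|E_2|$ and $|U_2|$ respectively. First I would fix a labelling of $G_1$ that makes it contribute no mono-indexed edges: assign distinct non-singleton sets to the vertices of $U_1$ and distinct singleton sets to the vertices of $V_1$. Since every edge of the bipartite graph $G_1$ joins a vertex of $U_1$ to a vertex of $V_1$, no edge of $G_1$ is mono-indexed, in agreement with Theorem \ref{T-SNBP}. The vertices of $G_1$ that are not mono-indexed are then exactly those of $U_1$, so the relevant parameter of Theorem \ref{T-NMIEG} is $r=|U_1|$.

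Next I would invoke Theorem \ref{T-NMIEG}: since $r=|U_1|$ vertices of $G_1$ are not mono-indexed, the $|U_1|$ copies of $G_2$ attached to the vertices of $U_1$ must be $1$-uniform. Each such copy has all of its $|E_2|$ edges mono-indexed, while the edges joining a (non-singleton) vertex of $U_1$ to its $1$-uniform copy are \emph{not} mono-indexed; these copies therefore contribute exactly $|U_1|\,|E_2|$ mono-indexed edges. For the remaining $|V_1|$ copies --- those attached to the mono-indexed vertices of $V_1$ --- I would label each copy so that the vertices of $U_2$ receive singleton sets and the vertices of $V_2$ receive non-singleton sets, using suitable integral multiples of a fixed labelling of $G_2$ so that all set-labels across $G_1\odot G_2$ remain distinct, exactly as in Theorem \ref{T-CP-BP}. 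With this choice each copy of $G_2$ is itself free of mono-indexed edges, and an edge joining the mono-indexed vertex of $V_1$ to a vertex of its copy is mono-indexed precisely when that vertex of $G_2$ lies in $U_2$. Hence each of these copies contributes $|U_2|$ mono-indexed edges, for a total of $|V_1|\,|U_2|$. Adding the two contributions gives $|U_1|\,|E_2|+|V_1|\,|U_2|$ mono-indexed edges, establishing the claimed upper bound.

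The part requiring the most care is the optimality, i.e. showing that no weak IASI of $G_1\odot G_2$ can do better. For any weak IASI, by Theorem \ref{T-NMIEG} at least $|U_1|$ copies of $G_2$ are forced to be $1$-uniform, one for each non-mono-indexed vertex of $G_1$, and a $1$-uniform copy necessarily contributes all $|E_2|$ of its edges; minimising the number of non-mono-indexed vertices of the bipartite graph $G_1$ yields precisely $r=|U_1|$ as used above. In each copy attached to a mono-indexed vertex of $G_1$, every edge of that copy must have at least one mono-indexed end vertex (by the characterisation of weak IASI graphs), so the mono-indexed vertices of the copy form a vertex cover of $G_2$, and the number of mono-indexed edges incident with the attaching vertex equals the size of this cover. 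Taking $U_2$, the smaller side of the bipartition, as an independent vertex cover shows that $|U_2|$ mono-indexed edges per copy is attainable with no internal mono-indexed edges.

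The genuine obstacle lies here: one must verify that $|U_2|$ is actually the minimum achievable cost per copy. Writing $S$ for the set of mono-indexed vertices of a copy, the cost of that copy is $|S|$ (the attaching edges) plus the number of edges induced by $S$ (the internal mono-indexed edges), minimised over all vertex covers $S$ of $G_2$. I would argue that $U_2$ realises this minimum $|S|+e(S)$ because it is an \emph{independent} vertex cover of the smaller side, so that $e(U_2)=0$ and $|U_2|\le|V_2|$, and that any strictly smaller vertex cover must induce enough internal edges to offset its smaller size. This balancing of cover size against induced edges is the step I would scrutinise most closely, since for a general bipartite $G_2$ it is precisely where the formula's dependence on $|U_2|$ (rather than on an arbitrary cover) needs to be justified; once it is settled, the two contributions combine to give $\varphi(G_1\odot G_2)=|U_1|\,|E_2|+|V_1|\,|U_2|$.
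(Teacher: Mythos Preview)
Your construction of the upper bound mirrors the paper's approach exactly; indeed, the paper offers no separate proof here, merely stating that the argument is ``similar to the proof of Theorem~\ref{T-CP-BP}'', and your first two paragraphs reproduce precisely that construction with the substitutions $m_2n_2\mapsto |E_2|$ and $m_2\mapsto |U_2|$.

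Where you go beyond the paper is in attempting an optimality argument, which the paper (both here and in Theorem~\ref{T-CP-BP}) omits entirely. You rightly flag the per-copy minimisation of $|S|+e(S)$ over vertex covers $S$ of $G_2$ as delicate. There is, however, a more basic gap in your lower-bound outline: the assertion that ``at least $|U_1|$ copies of $G_2$ are forced to be $1$-uniform'' is not justified. Theorem~\ref{T-NMIEG} forces one $1$-uniform copy for each \emph{non}-mono-indexed vertex of $G_1$, but nothing prevents $G_1$ itself from being taken $1$-uniform, in which case $r=0$ and \emph{no} copy need be $1$-uniform. Concretely, take $G_1=K_{1,1}$ and $G_2=K_{1,t}$: labelling both vertices of $G_1$ by singletons, and in each copy of $G_2$ labelling the centre by a singleton and the $t$ leaves by non-singletons, produces exactly $3$ mono-indexed edges (one in $G_1$ and one connecting edge per copy), whereas the stated formula gives $|U_1|\,|E_2|+|V_1|\,|U_2|=t+1$. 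Thus your implicit assumption that an optimal labelling of $G_1\odot G_2$ restricts to a sparing-number-zero labelling of $G_1$ is false in general, and the lower bound cannot be completed without analysing the trade-off between mono-indexed edges inside $G_1$ and the cost of the $1$-uniform copies of $G_2$.
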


Now that we have estimated the sparing number of the corona of two bipartite graphs, we need to find out the sparing number of the corona of a bipartite graph and an odd cycle. 

\begin{theorem}
Let $C_n$ be an odd cycle and $G(V_1,V_2,E)$ be a bipartite graph, where $|V_1|=r$, $|V_2|=s$ and $|E|=q$. Then,
\begin{eqnarray*}
\varphi(C_n\odot G) & = & \frac{1}{2}[(n-1)q+(n+1)r]\\
\text{and}~~~ \varphi(G\odot C_n) & = & \frac{1}{2}[2rn+s(n+1)].
\end{eqnarray*}
\end{theorem}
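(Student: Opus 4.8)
The plan is to handle the two coronas $C_n\odot G$ and $G\odot C_n$ separately, and in each case first to produce a weak IASI realizing the stated number of mono-indexed edges (the upper bound) and then to argue that no weak IASI can use fewer (the lower bound). Two facts drive everything. First, by the characterization of weak IASI graphs, a vertex labeling is a weak IASI precisely when the set of mono-indexed vertices is a vertex cover, and an edge is mono-indexed exactly when both of its end vertices are mono-indexed; thus counting mono-indexed edges amounts to counting the edges induced by the chosen vertex cover, and the required injectivity of the labelings is routinely arranged by using integral multiples as in the proof of Theorem \ref{T-CP-BP}. Second, by Theorem \ref{T-NMIEG}, the copy of the inner factor attached to a non-mono-indexed vertex of the outer factor is forced to be $1$-uniform, so every edge of such a copy is mono-indexed while none of the edges joining it to the outer factor is.

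For $C_n\odot G$ I would label the outer odd cycle first. By Remark \ref{R-NMEC} an odd cycle $C_n$ admits at most $\frac{n-1}{2}$ non-mono-indexed vertices, and by Theorem \ref{T-NME} it must carry an odd, hence at least one, number of mono-indexed edges; so I take the extremal labeling with $\frac{n-1}{2}$ non-mono-indexed vertices, $\frac{n+1}{2}$ mono-indexed vertices and exactly one mono-indexed edge inside the cycle. By Theorem \ref{T-NMIEG} the $\frac{n-1}{2}$ copies of $G$ hung on the non-mono-indexed vertices must be $1$-uniform, contributing $q$ mono-indexed edges each and no mono-indexed joining edges. Each of the remaining $\frac{n+1}{2}$ copies is given a sparing ($\varphi=0$) bipartite labeling (Theorem \ref{T-SNBP}) in which one bipartition class, of size $r$, is mono-indexed; since the attaching cycle vertex is mono-indexed, exactly those $r$ singleton vertices produce mono-indexed joining edges, so each such copy contributes $r$. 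Summing $\frac{n-1}{2}\,q$, $\frac{n+1}{2}\,r$ and the forced cycle edge gives the upper bound for $C_n\odot G$. For $G\odot C_n$ I would instead label the outer bipartite graph sparingly, making the smaller class $V_1$ (size $r$) non-mono-indexed and $V_2$ (size $s$) mono-indexed; then the $r$ copies of $C_n$ attached to $V_1$ are $1$-uniform ($n$ mono-indexed edges each), while each of the $s$ copies attached to $V_2$ is an odd cycle carrying one forced mono-indexed edge (Theorem \ref{T-WUOC}) together with $\frac{n+1}{2}$ mono-indexed joining edges, one for each of its $\frac{n+1}{2}$ singleton vertices.

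The delicate part is the lower bound, i.e.\ proving these constructions optimal. This needs three ingredients: (i) Remark \ref{R-NMEC}, to see that the number of copies forced to be $1$-uniform cannot be reduced, because the outer odd cycle in $C_n\odot G$ cannot have more than $\frac{n-1}{2}$ non-mono-indexed vertices; (ii) a minimization showing that any copy attached to a mono-indexed vertex contributes at least the stated number of mono-indexed edges, which for the bipartite inner factor is the claim that (size of a vertex cover) $+$ (number of edges it induces) is minimized by a whole bipartition class, and for the odd-cycle inner factor rests on the cycle optima of Theorems \ref{T-NME} and \ref{T-WUOC}; and (iii) careful bookkeeping of the mono-indexed edges that the odd cycles are forced to carry. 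I expect (iii) to be the main obstacle: the single forced edge of the outer cycle in $C_n\odot G$, and the one forced edge in each of the non-$1$-uniform inner cycles of $G\odot C_n$, must be added in, and keeping these additive constants consistent with the earlier corona theorems (which do display analogous $+1$ and $+2$ terms) is precisely where the accounting is easiest to get wrong.
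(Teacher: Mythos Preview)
Your outline follows the paper's proof almost verbatim: label the outer factor optimally, force the copies attached to non-mono-indexed outer vertices to be $1$-uniform (Theorem \ref{T-NMIEG}), and in the remaining copies use a sparing bipartite labeling (Theorem \ref{T-SNBP}) or the extremal odd-cycle labeling (Theorem \ref{T-WUOC}, Remark \ref{R-NMEC}). The paper does not attempt the lower-bound minimization you sketch in (i)--(iii); it simply exhibits the labeling and asserts the count is the sparing number, so your plan is strictly more complete on that front.

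The substantive discrepancy is exactly the bookkeeping you flag in (iii). The paper's proof silently drops the forced mono-indexed edge of the outer odd cycle in $C_n\odot G$ and the forced mono-indexed edge inside each of the $s$ non-$1$-uniform copies of $C_n$ in $G\odot C_n$; that is why its totals come out to $\tfrac12[(n-1)q+(n+1)r]$ and $\tfrac12[2rn+s(n+1)]$. Your (correct) count, which includes those edges, yields instead $1+\tfrac12[(n-1)q+(n+1)r]$ and $\tfrac12[2rn+s(n+3)]$. A small example such as $C_3\odot K_2$ (here $r=s=q=1$) gives sparing number $4$, matching your count, whereas the stated formula gives $3$; and the paper's own Theorem \ref{T-WIASI-GP-C5} for $C_m\odot C_n$ with $m,n$ odd \emph{does} carry the analogous ``$+1$'' and ``$+2$'' constants. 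So your accounting is right and will not reproduce the stated formulas; the discrepancy lies in the theorem as printed, not in your argument. One further caution on your step (ii): the claim that taking a whole bipartition class minimizes $|S|+|E(G[S])|$ over vertex covers $S$ of an arbitrary bipartite $G$ is not automatic and deserves its own short justification.
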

\begin{proof}
Without loss of generality, let $r\le s$. In $G$, label all the vertices in $V_1$ by distinct singleton sets and label all the vertices in $V_1$ by distinct non-singleton sets. 

\noindent {\em Case-1:} Since $C_n$ is an odd cycle, it has $\frac{n+1}{2}$ mono-indexed vertices and $\frac{n-1}{2}$ vertices that are not mono-indexed. Then, $\frac{n-1}{2}$ copies of $G$, must be $1$-uniform in $C_n\odot G$.  Therefore, the total number of mono-indexed edges in all these copies is $\frac{n-1}{2}q$. Now, label the vertices of the remaining copies of $G$, by suitable integral multiples of the set-labels of corresponding vertices of $G$ in such a way that no two vertices in $C_n\odot G$ have the same set-label. Then, no two vertex in these copies are mono-indexed. But, there are $r$ edges connecting a mono-indexed vertex of $C_n$ and the corresponding copy of $G$. The total number of such mono-indexed edges is $\frac{n+1}{2}r$. Therefore, the sparing number of $C_n\odot G$ is $\frac{1}{2}[(n-1)q+(n+1)r]$.

\noindent {\em Case-2:} Since $r\le S$, label all the vertices in $V_1$ of $G$ by distinct non-singleton sets and label all the vertices in $V_2$ by distinct singleton sets. Then, $r$ copies of $C_n$ must be $1$-uniform in $G\odot C_n$. The number mono-indexed copies all together is $rn$. Now, the remaining copies of $C_n$, corresponding to the vertices in $V_2$ of $G$, contain $\frac{n+1}{2}$ mono-indexed vertices each and hence there are $\frac{n+1}{2}$ mono-indexed edges between a vertex in $V_2$ and its corresponding copy of $C_n$ is $\frac{n+1}{2}$. Therefore, the number of such mono-indexed edges is $\frac{n+1}{2}s$. Hence, the sparing number of $G\odot C_n$ is $rn+\frac{n+1}{2}s=\frac{1}{2}[2rn+(n+1)s]$.
\end{proof}

It remains to find the sparing number of the corona of two graphs, one of which is a complete graph and the other is a bipartite graph. Then, we have

\begin{theorem}
Let $K_n$ be an odd cycle and $G(V_1,V_2,E)$ be a bipartite graph, where $|V_1|=r$, $|V_2|=s$ and $|E|=q$. Then,
\begin{eqnarray*}
\varphi(K_n\odot G) & = & \frac{1}{2}[(n-1)(n-2)+2q+2r(n-1)]\\
\text{and}~~~ \varphi(G\odot K_n) & = & \frac{1}{2}(n-1)[rn+s(n-2)].
\end{eqnarray*}
\end{theorem}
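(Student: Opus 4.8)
The plan is to establish the two values separately, in each case combining the extremal weak IASI of the complete factor (Theorem~\ref{T-WKN}) with the adjacency criterion of Theorem~\ref{T-NMIEG} and the fact that a bipartite graph is itself sparing-free (Theorem~\ref{T-SNBP}). Throughout I would assume without loss of generality that $r\le s$, label the two colour classes $V_1,V_2$ of $G$ so that each class receives set-labels of a single type (all singleton or all non-singleton), so that no edge \emph{inside} $G$ is mono-indexed, and then count the mono-indexed edges contributed by the copies of the complete graph together with the joining edges between $G$ and those copies.

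For $\varphi(K_n\odot G)$ I would first fix an optimal weak IASI on the central copy of $K_n$. By Theorem~\ref{T-WKN} such a labelling has exactly one non-mono-indexed vertex and accounts for $\tfrac12(n-1)(n-2)$ mono-indexed edges inside $K_n$. Theorem~\ref{T-NMIEG} then forces the single copy of $G$ hanging from that non-mono-indexed vertex to be $1$-uniform, contributing all $q$ of its edges as mono-indexed, while the edges joining it to the (non-mono-indexed) vertex of $K_n$ are not mono-indexed. Each of the remaining $n-1$ copies of $G$ hangs from a mono-indexed vertex of $K_n$; labelling such a copy with its smaller class $V_1$ singleton and its larger class $V_2$ non-singleton keeps $G$ sparing-free and leaves exactly $r$ mono-indexed joining edges per copy. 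Adding the three contributions gives $\tfrac12(n-1)(n-2)+q+(n-1)r=\tfrac12\bigl[(n-1)(n-2)+2q+2r(n-1)\bigr]$, as required.

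For $\varphi(G\odot K_n)$ the roles are reversed: there is one base copy of $G$ and $r+s$ copies of $K_n$. I would label the smaller class $V_1$ by non-singleton sets and the larger class $V_2$ by singleton sets, so that $G$ contributes no mono-indexed edge. By Theorem~\ref{T-NMIEG} the $r$ copies of $K_n$ sitting on $V_1$ must be $1$-uniform, each contributing its full $\binom{n}{2}=\tfrac12 n(n-1)$ edges, for a total of $\tfrac12 rn(n-1)$, with no mono-indexed joining edges at a non-mono-indexed vertex. The $s$ copies sitting on $V_2$ may instead carry the extremal labelling of $K_n$, contributing $\tfrac12(n-1)(n-2)$ mono-indexed edges inside each, for a total of $\tfrac12 s(n-1)(n-2)$; summing the complete-graph contributions produces $\tfrac12(n-1)\bigl[rn+s(n-2)\bigr]$.

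The construction and arithmetic are routine; the genuine obstacle is optimality, and within it the careful bookkeeping of the \emph{joining} edges at the $s$ copies of $K_n$ that carry the extremal (non-$1$-uniform) labelling. A clean route to a matching lower bound is to note that each vertex of $G$ together with its attached copy of $K_n$ spans a $K_{n+1}$, that these spans are pairwise edge-disjoint and edge-disjoint from $G$, and that by Theorems~\ref{T-WSG} and~\ref{T-WKN} every weak IASI of the corona restricts to one on each $K_{n+1}$ with at least $\tfrac12 n(n-1)$ mono-indexed edges. Reconciling this span-based bound with the explicit count above is precisely the delicate point, since it forces one to decide how many joining edges at a mono-indexed vertex of $V_2$ are unavoidably mono-indexed; this is the step that must be carried out most carefully before the stated formula can be asserted as the true minimum.
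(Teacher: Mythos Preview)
Your constructive approach in both parts is identical to the paper's: fix an extremal weak IASI on the complete factor, invoke Theorem~\ref{T-NMIEG} to determine which copies of the other factor must be $1$-uniform, and count. The paper carries out exactly these two constructions and stops, offering no optimality argument at all; your treatment of $\varphi(K_n\odot G)$ in particular reproduces the paper's computation line for line.

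Where you go beyond the paper is in your final paragraph, and the discrepancy you flag there is not a subtlety to be ironed out but an actual error in the stated formula for $\varphi(G\odot K_n)$. Your $K_{n+1}$ lower bound is perfectly valid: each vertex of $G$ together with its attached copy of $K_n$ spans a $K_{n+1}$, these $r+s$ cliques are pairwise edge-disjoint, and Theorems~\ref{T-WSG} and~\ref{T-WKN} force at least $\tfrac12 n(n-1)$ mono-indexed edges in each, so $\varphi(G\odot K_n)\ge \tfrac12(r+s)n(n-1)$. This exceeds the theorem's claimed value $\tfrac12(n-1)\bigl[rn+s(n-2)\bigr]$ by exactly $s(n-1)$. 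The missing term is precisely the $n-1$ mono-indexed joining edges at each of the $s$ copies of $K_n$ hanging from the mono-indexed vertices in $V_2$: in such a copy one vertex is non-singleton and the remaining $n-1$ are singleton, and since the base vertex in $V_2$ is also singleton, those $n-1$ joins are mono-indexed. Both your sentence ``summing the complete-graph contributions'' and the paper's Case~2 simply omit this contribution. Once it is included, the construction yields $\tfrac12(r+s)n(n-1)$, matching your lower bound exactly; so the correct sparing number is $\tfrac12(r+s)n(n-1)$, and the formula in the statement is wrong.
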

\begin{proof}
Let $r\le s$. In $G$, label all the vertices in $V_1$ by distinct singleton sets and label all the vertices in $V_1$ by distinct non-singleton sets. In $G$, label all the vertices in $V_1$ by distinct singleton sets and label all the vertices in $V_1$ by distinct non-singleton sets. 

\noindent {\em Case-1:} in $K_n$, every vertex is adjacent to all other vertices of it, only one vertex in $K_n$ can have a non-singleton set-label. Therefore, one copy of $G$ must be $1$-uniform in $K_n\odot G$. The vertices of the remaining copies of $G$ shall be labeled by the integral multiples of the set-labels of the corresponding vertices of $G$. Then, the total number of mono-indexed edges in $K_n\odot G$ is $q+(n-1)r+\frac{1}{2}(n-1)(n-2)=\frac{1}{2}[(n-1)(n-2)+2q+2r(n-1)]$.

\noindent {\em Case-2:} In $G\odot K_n$, $r$ copies of $K_n$ must be $1$-uniform. The remaining copies of $K_n$ can have $(n-1)$ vertices that are mono-indexed. Therefore, the total number of mono-indexed edges is $r\frac{1}{2}n(n-1)+s\frac{1}{2}(n-1)(n-2)=\frac{1}{2}(n-1)[rn+s(n-2)]$.

\end{proof}

\section{Conclusion}

In this paper, we have discussed about the sparing number of the corona of weak IASI graphs.  Some problems in this area are still open. Uncertainty in the adjacency pattern of different graphs makes this study complex. An investigation to verify the admissibility of weak IASIs by other graph products of two arbitrary graphs and to determine the corresponding sparing numbers seems to be fruitful.

\end{document}